\setlist[itemize]{leftmargin=2.1em}
\setlist[enumerate]{leftmargin=2.1em}
\newtheorem{theorem}{Theorem}[section]
\newtheorem{lemma}[theorem]{Lemma}
\theoremstyle{definition}
\theoremstyle{remark}
\newcommand{\RR}{\mathbb{R}}
\newcommand{\PP}{\mathbb{P}}
\newcommand{\EE}{\mathbb{E}}
\newcommand{\abs}[1]{\left|#1\right|}
\newcommand{\bigset}[1]{\Bigl\{#1\Bigr\}}
\newcommand{\floor}[1]{\left\lfloor #1\right\rfloor}
\newcommand{\Sig}{\Sigma}
\title{Distinct permutation dot products}
\author{Cosmin Pohoata}
\date{}
\begin{document}
\maketitle

\begin{abstract}
We show that for any two sets of reals numbers $A=\{a_1,\dots,a_n\}$ and $B=\{b_1,\dots,b_n\}$, the sums of the form $\sum_{i=1}^n a_i\,b_{\pi(i)}$ always take on $\Omega(n^{3})$ distinct values, as we range over all permutations $\pi \in S_n$.

An important ingredient is a ``supportive'' version of Hal\'asz's anticoncentration theorem from Littlewood-Offord theory, which may be of independent interest. 
\end{abstract}

\section{Introduction} \label{sec:intro}

Let $A=\{a_1,\dots,a_n\}\subset\RR$ and $B=\{b_1,\dots,b_n\}\subset\RR$ be sets of $n$ distinct reals.
For $\pi\in S_n$ define the \emph{permutation dot product}
\[
S(\pi):=\sum_{i=1}^n a_i\,b_{\pi(i)},
\qquad
\Sig(A,B):=\bigset{S(\pi):\pi\in S_n}.
\]
Equivalently, writing $a=(a_1,\dots,a_n) \in \mathbb{R}^{n}$ and $b=(b_1,\dots,b_n) \in \mathbb{R}^{n}$, we have
$S(\pi)=\langle a, b_\pi\rangle$ where $b_\pi=(b_{\pi(1)},\dots,b_{\pi(n)}) \in \mathbb{R}^{n}$, so $\Sig(A,B)$ is the
set of values taken by a fixed linear functional on the permutation orbit of $b$. Geometrically, $|\Sig(A,B)|$ can also be thought of as the number of (parallel) hyperplanes with normal vector $(a_1,\ldots,a_n)$ that are needed to cover the vertices of the permutohedron $\operatorname{conv}\left\{b_{\pi}:\ \pi \in S_n\right\}$. 

In this paper, we determine the minimum size of $\abs{\Sig(A,B)}$, as $A,B$ range over all sets of $n$ distinct reals. 

\begin{theorem}\label{thm:main}
Let $A,B\subset\RR$ be sets of $n$ distinct real numbers. 
\[
\abs{\Sig(A,B)}\ \geq cn^{3},
\]
where $c >0$ is an absolute constant.
\end{theorem}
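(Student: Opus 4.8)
The plan is to restrict attention to a large subfamily of permutations on which $S$ becomes, up to an additive constant, a $\{0,1\}$‑weighted sum of a fixed collection of positive reals, and then to count the distinct values of that sum with the supportive Hal\'asz theorem.

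First I would normalize by taking $a_1<\dots<a_n$ and $b_1<\dots<b_n$. Given any matching $M=\{\{j_1,k_1\},\dots,\{j_m,k_m\}\}$ on $[n]$ (a set of pairwise disjoint pairs) and any subset $I\subseteq[m]$, let $\pi_I$ be the product of the transpositions $(j_i\ k_i)$ with $i\in I$; since these commute, expanding $S(\pi_I)$ yields
\[
S(\pi_I)=S(\mathrm{id})-\sum_{i\in I}(a_{k_i}-a_{j_i})(b_{k_i}-b_{j_i}).
\]
Writing $v_i:=(a_{k_i}-a_{j_i})(b_{k_i}-b_{j_i})>0$, we see that $\Sig(A,B)$ contains $\{\,S(\mathrm{id})-\sum_{i\in I}v_i:\ I\subseteq[m]\,\}$. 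It therefore suffices to exhibit a matching with $m=\Theta(n)$ for which $\{v_1,\dots,v_m\}$ has $\gtrsim n^3$ distinct subset sums.

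The heart of the argument — and the main place where the distinctness of the $a_i,b_i$ enters — is to show that the matching can always be chosen so that the weights $v_i$ are sufficiently ``spread''. The products $P_{jk}:=(a_k-a_j)(b_k-b_j)$ are strictly super-additive along nested intervals, $P_{j\ell}>P_{jk}+P_{k\ell}$ for $j<k<\ell$ (the excess is $(a_k-a_j)(b_\ell-b_k)+(a_\ell-a_k)(b_k-b_j)>0$), which already forces $\max_{j<k}P_{jk}\ge (n-1)\min_{j<k}P_{jk}$ and, more usefully, prevents the $P_{jk}$ from concentrating at boundedly many scales on every long index‑interval. I would turn this into a dichotomy: either $A,B$ have enough multiplicative spread that a matching can be chosen with $\{v_i\}$ dissociated — giving $2^{\Theta(n)}$ subset sums outright — or one locates a subinterval $[s,t]\subseteq[n]$ of length $\Theta(n)$ on which the consecutive gaps are comparable and packs disjoint pairs there realizing $\Theta(n)$ distinct differences (a Langford‑type packing), so that $v_i\asymp i^2$ after scaling. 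In this latter, interval‑like regime the subset sums of $\{v_i\}$ fill up an interval of length $\asymp n^3$, so again there are $\gtrsim n^3$ of them.

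Finally I would apply the supportive Hal\'asz theorem to $\{v_1,\dots,v_m\}$ to conclude that $\bigl|\{\sum_{i\in I}v_i:I\subseteq[m]\}\bigr|\ge cn^3$, hence $\abs{\Sig(A,B)}\ge cn^3$. The step I expect to be hardest is controlling this multi‑scale structure of $A$ and $B$ and locating the scale that actually carries $n^3$ worth of values. A plain anticoncentration estimate does not suffice: bounding $\sup_x\PP\bigl(\sum_i\epsilon_i v_i=x\bigr)$ only yields $\abs{\Sig(A,B)}\gtrsim n^{5/2}$, because in the extremal, near‑arithmetic regime the subset‑sum distribution is concentrated on a window of length $\asymp n^{5/2}$ that is far shorter than its support of length $\asymp n^{3}$ — which is precisely why the support‑counting strength of the supportive Hal\'asz theorem is needed, in tandem with the careful (and possibly adaptive) choice of the matching $M$.
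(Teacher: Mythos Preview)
Your framework matches the paper's: disjoint transpositions embed a translate of a subset-sum set $\Sigma(D)$ into $\Sigma(A,B)$, and the supportive Hal\'asz theorem converts low additive energy of $D$ into a cubic lower bound on $|\Sigma(D)|$. You also correctly diagnose why plain anticoncentration stalls at $n^{5/2}$. But the step you flag as ``hardest'' is exactly where your proposal has a genuine gap, and the paper's solution there is quite different from your sketch.

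Your dichotomy is not well-posed. You take the base permutation to be the identity, so each increment is $v_i=(a_{k_i}-a_{j_i})(b_{k_i}-b_{j_i})$ with the \emph{same} index pair on both factors. In the ``interval-like'' branch you ask for a subinterval of $[n]$ on which ``the consecutive gaps are comparable''; but $A$ and $B$ are unrelated, and there is no reason the $A$-gaps and $B$-gaps should be simultaneously comparable on any long index window (take $A$ an arithmetic progression and $B$ a geometric one). Without that, the conclusion $v_i\asymp i^2$ fails. The ``spread $\Rightarrow$ dissociated'' branch is also unspecified: you give no criterion that forces you into one branch or the other, and the super-additivity inequality you cite, while true, does not by itself produce a dissociated system of $\Theta(n)$ disjoint increments.

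There is also a structural obstruction to the single-transposition route. With a flexible base permutation the increment pool is $(A-A)(B-B)$, and even that can have size only $n^2/(\log n)^{\delta_0+o(1)}$ (the multiplication-table phenomenon, already for $A=B=[n]$). In the paper this is exactly why single swaps yield only $|\Sigma(A,B)|\gg n^3/(\log n)^{3/2}$. The paper removes the logarithm not by a structural dichotomy but by two devices you do not use: (i) it lets the base $\pi_0$ be chosen \emph{after} the increments, decoupling the $A$-indices from the $B$-indices, and (ii) it takes each generator to be a \emph{pair} of disjoint transpositions, so the increment pool becomes $(A-A)(B-B)+(A-A)(B-B)$, which an elementary Solymosi-style argument shows has size $\ge |A||B|/2$. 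A random sample of $m\asymp n$ increments from this pool then has $\EE\,E_2(D)\ll m^2 + m^4/n^2\ll m^2$, and supportive Hal\'asz finishes. Your proposal would need a replacement for both of these ideas; as written, the multi-scale analysis you outline does not supply one.
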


It is not difficult to see that Theorem \ref{thm:main} is optimal, up to the precise value of $c$. Indeed, if $A=B=\{1,2,\dots,n\}$, then for any permutation $\pi \in S_n$, we have that
$$\frac{n(n+1)(n+2)}{6} =\sum_{i=1}^{n} i(n+1-i) \leq S(\pi)=\sum_{i=1}^n i\,\pi(i) \leq \sum_{i=1}^{n}{i^2} = \frac{n(n+1)(2n+1)}{6}.$$
This follows for example from the rearrangement inequality (see \cite{HLP52}). Hence $\abs{\Sigma([n],[n])}$ is on the order of $n^3$. 

Theorem \ref{thm:main} is related in some sense to a new Erd\H{o}s-Moser-type theorem for permutations, which arose recently in two independent works on the permutation anticoncentration model: the work of the author with Hunter and Zhu \cite{HPZ26} and the work by Do, Nguyen, Phan, Tran and Vu \cite{DNPTV25}. This result states the following:

\begin{theorem}[HPZDNPTV]\label{thm:EM-context}
For all sets $A,B\subset\RR$ of size $n$ with distinct elements, and any unformly random permutation $\pi$ on $S_n$
\[
\sup_{x\in\RR}\PP\bigl(S(\pi)=x\bigr) \le\ n^{-5/2+o(1)}.\qquad
\]
\end{theorem}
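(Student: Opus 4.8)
The plan is an induction on $n$ organized around a two-scale description of $S(\pi)$. First normalize: since translating $A$ or $B$ by a constant translates every $S(\pi)$ by a fixed amount, assume $\sum_i a_i=\sum_i b_i=0$. Generate $\pi$ in two stages. Stage one, the \emph{coarse data}: choose a uniformly random partition of the positions into $n/2$ pairs, and, independently, a uniformly random partition of the values into $n/2$ pairs together with a uniformly random bijection matching position-pairs to value-pairs. Stage two: on each position-pair flip an independent fair coin to decide which of the two (order-preserving or order-reversing) ways $\pi$ maps it onto the matched value-pair. If a pair is matched as $\{k,k'\}\leftrightarrow\{l,l'\}$, write $\bar a_t,d^a_t$ for the half-sum and half-difference of $a_k,a_{k'}$, and $\bar b_t,d^b_t$ for those of $b_l,b_{l'}$; a direct computation then gives
\[
S(\pi)\;=\;\underbrace{2\sum_{t}\bar a_t\bar b_t}_{C(\mathrm{coarse})}\;+\;2\sum_{t}\varepsilon_t\,d^a_t d^b_t ,\qquad \varepsilon_t\in\{\pm1\}\ \text{i.i.d.\ uniform.}
\]
Thus, conditionally on the coarse data, $S(\pi)$ is a constant $C(\mathrm{coarse})$ plus a Littlewood--Offord sum $\sum_t\varepsilon_t c_t$ with $c_t:=2d^a_td^b_t$, while $C(\mathrm{coarse})$ is itself --- modulo the nuisance that the pair-averages need not be distinct --- a permutation dot product of size $n/2$, of the tuple $\{\bar a_t\}$ against $\{\bar b_t\}$.

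For a typical coarse matching the coefficients $c_t$, being products of half-differences of $A$ and of $B$, are spread out, so Halász's inequality yields $\sup_y\PP_\varepsilon\bigl(\sum_t\varepsilon_t c_t=y\bigr)\lesssim n^{-3/2+o(1)}$, and hence $\sup_x\PP(S(\pi)=x)\lesssim n^{-3/2+o(1)}$ already. Closing the gap to $n^{-5/2+o(1)}$ is the heart of the matter, and it has to use that, as the coarse data varies, the target $x-C(\mathrm{coarse})$ of the Littlewood--Offord sum is itself moving, and moving at a finer scale. The route I would take is to iterate the construction --- coarsen the size-$n/2$ dot product $C(\mathrm{coarse})$ in exactly the same way, recursing on half-sums --- so that $S(\pi)$ becomes a constant plus a superposition of Littlewood--Offord layers of sizes $n/2,n/4,\dots$ at successively finer scales, and then to invoke a scale-sensitive (``supportive'') strengthening of Halász's inequality: adjoining to a Halász sum a weakly correlated quantity that is anticoncentrated at a sufficiently fine scale genuinely reduces its concentration function, so the layers multiply and supply the missing factor $n^{-1}$.

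The one place this route fails is when the half-differences cannot be made to spread by any coarse matching, i.e.\ when $A$ or $B$ is, up to an affine map and a mild perturbation, the progression $\{1,\dots,n\}$; by the inverse form of the supportive Halász inequality this is the only obstruction. In that regime one estimates $\sup_x\PP(S(\pi)=x)$ directly: $S(\pi)$ is then $\sum_i i\,\pi(i)$ plus a lower-order perturbation, a linear statistic of a uniform permutation of variance $\asymp n^{5}$, so a local central limit theorem for such statistics gives point masses of size $\asymp n^{-5/2}$ --- which is where the exponent $5/2$ literally comes from, as half the variance exponent $5$, and which also shows the bound of the theorem is sharp. The main obstacle is the scale-separation step of the previous paragraph: the Littlewood--Offord layers all descend from the single permutation $\pi$ and so are far from independent, so the ``multiplication of concentration functions'' must be carried out with genuine dependence; a quantitatively sharp inverse theorem is simultaneously needed so that the degenerate case contributes $n^{-5/2+o(1)}$ rather than something larger. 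Granting these, combining the recursive bound with the degenerate estimate and tracking the $o(1)$'s closes the induction.
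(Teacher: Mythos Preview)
The paper does not prove Theorem~\ref{thm:EM-context}. It is quoted as an external result, established independently in \cite{HPZ26} and \cite{DNPTV25}, and is used only as context and motivation for Theorem~\ref{thm:main}. There is therefore no ``paper's own proof'' of this statement to compare your sketch against. The only hint the paper offers is the remark that its proof of Theorem~\ref{thm:main} is ``morally similar'' to the proof of Theorem~\ref{thm:EM-context} in \cite{HPZ26}; the ingredients it then develops---disjoint-transposition embeddings of subset sums and a supportive Hal\'asz bound for \emph{support size}---are not the recursive half-sum/half-difference scheme you propose.

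On its own merits, your outline is a reasonable heuristic for the exponent $5/2$, and the pairing identity
\[
S(\pi)=2\sum_t\bar a_t\bar b_t+2\sum_t\varepsilon_t\,d^a_t d^b_t
\]
is correct and a natural starting point. But, as you yourself acknowledge, it is not a proof. The crucial step---that the concentration functions of the successive Littlewood--Offord layers ``multiply'' to gain the missing factor $n^{-1}$---is asserted rather than established, and this is precisely the hard part, since all layers are deterministic functions of the same permutation and are far from independent. You also invoke, without proof, a quantitatively sharp inverse theorem to isolate the near-arithmetic-progression case and a local CLT for permutation linear statistics to handle it. Your sketch correctly identifies the obstacles but does not overcome any of them, so it is a program rather than a proof.
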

A trivial observation is the fact that
\begin{equation*}
\abs{\Sig(A,B)}\ \ge\ \frac{1}{\sup_{x\in\RR}\PP\bigl(S(\pi)=x\bigr)},
\end{equation*}
so Theorem \ref{thm:EM-context} immediately implies that $\abs{\Sig(A,B)}\ge n^{5/2-o(1)}$. Nevertheless, Theorem \ref{thm:EM-context} is also sharp (up to the subpolynomial factor) because of the case $A=B=[n]$. Therefore, the point of Theorem~\ref{thm:main} is to improve this first estimate to the correct cubic scale.

On a global level, the proof of Theorem ~\ref{thm:main} will be morally similar to the proof of Theorem \ref{thm:EM-context} from \cite{HPZ26}, however the details will be locally entirely different (and sometimes much simpler). 

\subsection*{Proof overview}

The basic idea is as follows. Fix a base permutation $\pi_0$, and first note that composing $\pi_0$ with a disjoint transposition $(ij)$ changes
$S(\pi)$ by a rectangular area
\[
(a_i-a_j)\bigl(b_{\pi_0(j)}-b_{\pi_0(i)}\bigr) \in (A-A)(B-B).
\]
Choosing many disjoint transpositions produces a (translate of) a collection of subset sums $\Sig(D)$ living inside
$\Sig(A,B)$, where $D \subset (A-A)(B-B)$ is set of ``swap increments''. 

In order to make $\Sig(D)$ as large as possible, we will first develop a
``support version'' of Hal\'asz's inequality from Littlewood-Offord theorem that lower bounds $\abs{\Sig(D)}$ in terms of the additive energy $E_2(D)$ (Section \ref{sec:supportive}). It is perhaps important to highlight that Hal\'asz's theorem by itself can also technically provide a lower bound $\abs{\Sig(D)}$ in terms of $E_{2}(D)$, but that estimate won't suffice for exactly the same reasons Theorem \ref{thm:EM-context} can't directly prove Theorem \ref{thm:main}. 
Finally, we show that one can select $m\asymp n/\sqrt{\log n}$ disjoint swap increments from the pool
$(A-A)(B-B)$ with energy $E_2(D)\ll m^2$. This selection uses a theorem of Roche--Newton and Rudnev \cite[Theorem 1']{RNR}, which in turn relies on a deep incidence theorem for points and lines in $\mathbb{R}^{3}$ due to Guth and Katz from \cite{GuthKatz15} (see also \cite{BloomWalker19} for a simpler proof that only relies on the Szemer\'edi-Trotter theorem). Putting everything together will then yield the slightly weaker bound $\abs{\Sig(D)}\gg m^{5} / E_{2}(D) \gg m^3 \asymp n^{3} / (\log n)^{3/2}$. We discuss this argument first in Sections \ref{sec:subcube} and \ref{sec:lossy}. 

The logarithmic loss in this initial estimate comes from the fact that the one--switch increment
set $(A-A)(B-B)$ can be as small as $n^2/(\log n)^{\delta_0+o(1)}$, where $\delta_0 = 0.86071\ldots$ denotes the Erd\H{o}s-Tenenbaum-Ford constant (e.g.\ for $A=B=[n]$).  To complete the proof of Theorem \ref{thm:main}, we bypass this obstruction by using products of {\it{pairs}} of disjoint transpositions for the ``swap increments''. The resulting increment will be a sum of two rectangle areas and will therefore lie
in the set
\[
(A-A)(B-B)+(A-A)(B-B).
\]
The main point is that this expander turns out to be quadratically large, with no logarithmic loss. The rest of the argument will then use a similar strategy as above: we extract a slightly larger set $D$ of $m \asymp n$ (rather than $n/\sqrt{\log n}$) disjoint swap increments from the pool
$(A-A)(B-B)+(A-A)(B-B)$, with additive energy $E_2(D)\ll m^2$. Our supportive Hal\'asz theorem will then convert this into the lower bound $\abs{\Sig(D)}\gg m^{5} / E_{2}(D) \gg m^3 \asymp n^{3}$, as desired. We will discuss all this in Section \ref{sec:cubic}. 

Last but not least, it is perhaps important to highlight that in order show $|(A-A)(B-B)+(A-A)(B-B)| \gg |A||B|$, we will no longer require the Guth-Katz theorem or the Szemer\'edi-Trotter theorem. The proof will be completely elementary, but in some sense is inspired from Solymosi's well-known ``sector argument'' from \cite{Solymosi09}.

\smallskip
\subsection*{Notation} We use the notation $[N]$ to denote the set $\left\{1,\ldots,N\right\}$. We use $f \ll g$ to denote $f \leq Cg$ for some $C$, where the constant $C$ may depend on subscripts on the $\ll$. We use $f \asymp g$ to denote $f \ll g$ and $f \gg g$, with subscripts treated similarly.

\smallskip
\subsection*{Acknowledgments} 
The author was supported by NSF grant DMS-2246659. The author would also like to thank Zach Hunter, Oliver Roche-Newton, and Daniel Zhu for useful discussions.  

\smallskip
\section{A supportive Hal\'asz theorem} \label{sec:supportive}

For a finite set $D\subset\RR$, formally define its subset-sum set
\[
\Sig(D):=\bigset{\sum_{x\in S}x:\ S\subseteq D}.
\]
For an integer $k\ge 1$, define the $k$-th additive energy
\[
E_k(D):=\#\bigset{(a_1,\dots,a_k,b_1,\dots,b_k)\in D^{2k}:\ a_1+\cdots+a_k=b_1+\cdots+b_k}.
\]
This is a fundamental parameter that appears essentially everywhere throughout additive combinatorics, see for example \cite{TaoVu06}. We are now ready to state the main result of this section. 

\begin{theorem}\label{thm:support-halasz}
Let $D \subset\RR$ be finite with $\abs{D}=m$, and let $k\ge 1$ be fixed. Then
\[
\abs{\Sig(D)}\ \gg_k\ \frac{m^{2k+1}}{E_k(D)}.
\]
\end{theorem}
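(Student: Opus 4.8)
The plan is to lower-bound $|\Sig(D)|$ via a second-moment / Cauchy--Schwarz argument applied to a natural counting function. Consider the random subset sum $X = \sum_{x \in D} \varepsilon_x x$ where the $\varepsilon_x$ are i.i.d.\ Bernoulli$(1/2)$; then $X$ is supported on $\Sig(D)$ (after the harmless affine normalization replacing $x$ by $x$ itself, or symmetrizing to $\sum \eta_x x$ with $\eta_x \in \{-1,+1\}$, which only shifts and scales the support). By Cauchy--Schwarz,
\[
1 = \Bigl(\sum_{s \in \Sig(D)} \PP(X = s)\Bigr)^2 \le |\Sig(D)| \cdot \sum_{s} \PP(X=s)^2 = |\Sig(D)| \cdot \PP(X = X'),
\]
where $X'$ is an independent copy of $X$. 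So it suffices to show $\PP(X = X') \ll_k E_k(D)/m^{2k+1}$, i.e.\ to bound the collision probability of the $\pm 1$ (or $0/1$) weighted sums.

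The key step is to bound this collision probability in terms of $E_k(D)$. The difference $X - X' = \sum_{x \in D} \sigma_x x$ where $\sigma_x \in \{-1,0,+1\}$ takes value $0$ with probability $1/2$ and $\pm 1$ each with probability $1/4$; we want $\PP(\sum \sigma_x x = 0)$. The standard route (this is exactly the Halász-type input) is to pass through the characteristic function: write $\PP(\sum \sigma_x x = 0)$ as an integral of $\prod_{x}\bigl(\tfrac12 + \tfrac12\cos(2\pi t x)\bigr)$ against $dt$ on a suitable torus, or — to stay elementary and avoid Fourier analysis on $\RR$ — use the moment method directly. Namely, for the count $N_0 := \#\{\varepsilon \in \{0,1\}^D : \sum \varepsilon_x x = 0\}$ one has, by positivity and expanding, a bound relating $N_0$ to $E_k$: the number of solutions to $\sum_{x \in D}\varepsilon_x x = \sum_{x\in D}\varepsilon'_x x$ with $\varepsilon,\varepsilon' \in \{0,1\}^D$ is at most the number of $k$-fold additive quadruples times a combinatorial factor, because a low-weight $\pm 1$ combination summing to zero can be grouped into $k$ "plus" terms matching $k$ "minus" terms; more precisely I would show that the $2^{2m}\PP(X=X')$ equals $\#\{(\varepsilon,\varepsilon'): \sum \varepsilon_x x = \sum \varepsilon'_x x\}$ and bound this by comparing against choosing $2k$ coordinates freely. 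The cleanest implementation: condition on the symmetric-difference support size $2j$ (it must be even by parity of a zero-sum signed set being impossible to skirt — actually just bound over all $j$), and for each fixed $j \le k$ the number of signed zero-sums with exactly $2j$ nonzero terms is at most $E_j(D) \le E_k(D) \cdot (2m)^{2k-2j}$ by Hölder/pigeonhole monotonicity-type inequalities for energies, while for $j > k$ we use the trivial bound and the fact that the "typical" difference has $\asymp m$ nonzero terms so those configurations are exponentially suppressed by the $2^{-\Theta(m)}$ weight — combining gives $2^{2m}\PP(X = X') \ll_k E_k(D)\,m^{2k}\,/\,(\text{something}) $; tracking the binomial weights $\binom{m}{j}$ this yields $\PP(X=X') \ll_k E_k(D)/m^{2k+1}$.

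I expect the main obstacle to be making the last combinatorial accounting clean and uniform in $k$: one must argue that the contribution to $\PP(X = X')$ from signed zero-sums supported on a set of size $2j$ is controlled by $E_k(D)$ for \emph{all} $j$ simultaneously, handling both the "few terms" regime ($j \le k$, where $E_j \le m^{2(k-j)}E_k$ is the relevant monotonicity, proved by padding a $j$-fold additive equation with $k-j$ matched pairs) and the "many terms" regime (where the $\binom{m}{2j}$ growth is beaten by the normalization $4^{-j}$ inside the collision probability together with a convexity bound $E_j \le m^{2j-1}|\Sig(D)|$-type estimate — which would be circular — so instead one must get an \emph{absolute} bound $E_j(D) \le (2m)^{2j}$ and check that $\sum_{j > k}\binom{m}{2j}(2m)^{2j} 4^{-(\cdot)}$ loses only a constant factor, which forces a careful choice of the $\pm 1$ vs $0/1$ model). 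An alternative, possibly smoother, route uses the Fourier-analytic Halász machinery on $\RR$ (approximating by a fine lattice) where $\PP(X = X') \asymp \int \prod_x \cos^2(\pi t x)\,d\mu(t)$ and one bounds the integral by the level-set structure of $\sum_x \|t x\|^2$, tying it to $E_k(D)$ via the standard dyadic decomposition of the Halász argument; I would adopt whichever of these two makes the dependence on $k$ most transparent, and I anticipate the write-up will hinge on getting that single inequality $\#\{\text{signed zero-sums}\} \ll_k E_k(D)\,m^{2k}$ stated and proved in the right normalization.
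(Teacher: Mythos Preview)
Your approach has a genuine gap at the very first step, and it cannot be repaired. The Cauchy--Schwarz inequality
\[
|\Sigma(D)| \ \ge\ \frac{1}{\PP(X=X')}
\]
is already too weak to yield the theorem, regardless of how sharply you later bound $\PP(X=X')$. Take $D=[m]=\{1,2,\dots,m\}$. Then $X-X'=\sum_{i=1}^m \sigma_i\,i$ with $\sigma_i\in\{-1,0,1\}$ has variance $\asymp m^3$, and a local CLT (or an elementary characteristic-function computation) gives $\PP(X=X')\asymp m^{-3/2}$. Hence your method can produce at best $|\Sigma(D)|\gg m^{3/2}$. On the other hand, for every $k\ge 1$ one has $E_k([m])\asymp_k m^{2k-1}$, so the theorem claims $|\Sigma(D)|\gg_k m^{2k+1}/m^{2k-1}=m^2$, which is in fact the true order of $|\Sigma([m])|$. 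In particular your target inequality $\PP(X=X')\ll_k E_k(D)/m^{2k+1}$ is simply false for $D=[m]$: the left side is $\asymp m^{-3/2}$ while the right side is $\asymp m^{-2}$. This is precisely why the paper stresses that the ordinary Hal\'asz anticoncentration bound only gives a ``polynomially lossy'' estimate for $|\Sigma(D)|$; you are reproducing that lossy route.

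The paper's argument is not probabilistic at all. It first passes to the positive half $P\subseteq D$, orders $P=\{p_1<\cdots<p_M\}$, and sets $L_t=p_{M-kt+1}+\cdots+p_M$. The key observation is that the translated blocks $\mathcal S_t:=L_t + k^{\wedge}P_t$ (where $P_t$ is the prefix $\{p_1,\dots,p_{M-kt}\}$ and $k^{\wedge}$ denotes $k$-term distinct sums) are pairwise disjoint subsets of $\Sigma(P)$, because $\max(\mathcal S_t)=L_{t+1}<\min(\mathcal S_{t+1})$. A single Cauchy--Schwarz then gives $|k^{\wedge}P_t|\ge (M-kt)_k^{\,2}/E_k(D)$, and summing over $\asymp M/k$ values of $t$ produces the extra factor of $M$ that your collision-probability approach is missing. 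The ordering/positivity trick is what buys the improvement from $m^{2k}/E_k(D)$ (which is what Hal\'asz-type anticoncentration naturally gives) to $m^{2k+1}/E_k(D)$.
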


For instance, if $D$ is Sidon then $E_2(D)\asymp m^2$ and Theorem \ref{thm:support-halasz} for $k=2$ gives $\abs{\Sig(D)}\gg m^3$. This is sharp because there exist Sidon sets $D$ of size $m \asymp n^{1/2}$ in $[n]$: indeed, if $D \subset [n]$ then $\Sigma(D) \subset [|D|n]$ and $|D|n \asymp m^{3}$. This particular estimate for subset sums of Sidon sets is due to Balogh-Lavrov-Shakan-Wagner \cite{BLSW18}.

Theorem \ref{thm:support-halasz} should be compared to the appropriate point-mass version of Hal\'asz theorem, which only implies a polynomially lossy lower bound for $\abs{\Sig(D)}$. See for example \cite[Corollary 6.3]{NguyenVu13} and the references therein for more details. 

\subsection*{Proof of Theorem~\ref{thm:support-halasz}}

Write $m=\abs{D}$. At least $\lceil m/2\rceil$ elements of $D$ have the same sign; replacing $D$ by
$-D$ if needed, we may assume that $D$ contains a subset $P\subseteq D$ of positive elements with
\[
\abs{P}=M\ge m/2.
\]
Since $\Sig(P)\subseteq \Sig(D)$ and $E_k(P)\le E_k(D)$, it suffices to prove
\[
\abs{\Sig(P)}\ \gg_k\ \frac{M^{2k+1}}{E_k(D)}.
\]

Write $P=\{p_1<\cdots<p_M\}\subset\RR_{>0}$. For $t\ge 0$ define
\[
L_t:=p_{M-kt+1}+p_{M-kt+2}+\cdots+p_M,
\qquad L_0:=0,
\]
and define the remaining prefix
\[
P_t:=\{p_1,\dots,p_{M-kt}\}.
\]
Let $k^{\wedge}P_t$ denote the set of sums of $k$ \emph{distinct} elements of $P_t$:
\[
k^{\wedge}P_t:=\bigset{x_1+\cdots+x_k:\ x_1,\dots,x_k\in P_t\ \text{distinct}}.
\]
Define the ``block''
\[
\mathcal{S}_t:=L_t+k^{\wedge}P_t\ \subseteq\ \Sig(P).
\]
The main idea behind the proof is in some sense given by the following simple observation.

\begin{lemma}\label{lem:block-disjoint}
For $t=0,1,\dots,\floor{M/k}-1$, the sets $\mathcal{S}_t$ are pairwise disjoint.
\end{lemma}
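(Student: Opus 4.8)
The plan is to show that the blocks $\mathcal{S}_t$ live in pairwise disjoint intervals, by tracking the range of possible values inside each block. Fix $t$ in the stated range. Every element of $\mathcal{S}_t = L_t + k^{\wedge}P_t$ has the form $L_t + (x_1 + \cdots + x_k)$, where $x_1 < \cdots < x_k$ are distinct elements of $P_t = \{p_1, \dots, p_{M-kt}\}$. Since the $p_i$ are positive and increasing, the sum $x_1 + \cdots + x_k$ of $k$ distinct elements of $P_t$ is minimized by taking the $k$ smallest available elements and maximized by taking the $k$ largest; hence
\[
p_1 + \cdots + p_k \ \le\ x_1 + \cdots + x_k \ \le\ p_{M-kt-k+1} + \cdots + p_{M-kt}.
\]
Adding $L_t$, every element of $\mathcal{S}_t$ lies in the interval $[\,L_t + p_1 + \cdots + p_k,\ L_t + p_{M-k(t+1)+1} + \cdots + p_{M-kt}\,]$.

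The key observation is that the upper endpoint for $\mathcal{S}_t$ is exactly $L_{t+1}$: indeed $L_{t+1} = p_{M-k(t+1)+1} + \cdots + p_M = (p_{M-k(t+1)+1} + \cdots + p_{M-kt}) + L_t$, which matches the displayed upper bound. Meanwhile the lower endpoint for $\mathcal{S}_{t+1}$ is $L_{t+1} + p_1 + \cdots + p_k$. Since all the $p_i$ are strictly positive, $p_1 + \cdots + p_k > 0$, so the maximum of $\mathcal{S}_t$ (namely $L_{t+1}$) is strictly less than the minimum of $\mathcal{S}_{t+1}$ (namely $L_{t+1} + p_1 + \cdots + p_k$). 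By induction on the gap, for any $t < t'$ in the range we get $\max \mathcal{S}_t \le L_{t+1} \le L_{t'} < \min \mathcal{S}_{t'}$, using that $t \mapsto L_t$ is nondecreasing (each step adds $k$ positive terms). Hence $\mathcal{S}_t \cap \mathcal{S}_{t'} = \emptyset$.

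I should double-check one degenerate point: for the blocks to be nonempty and the index arithmetic to make sense we need $P_t$ to have at least $k$ elements, i.e. $M - kt \ge k$, equivalently $t \le M/k - 1$; this is exactly why the range is restricted to $t \le \floor{M/k} - 1$, so the bookkeeping is consistent. The main (and only real) obstacle is simply making sure the telescoping identity $L_{t+1} = L_t + (p_{M-kt-k+1} + \cdots + p_{M-kt})$ is aligned correctly with the extremal sums in $k^{\wedge}P_t$ — once the indices are matched, positivity of the $p_i$ does all the work and there is no further difficulty.
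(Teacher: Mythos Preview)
Your proof is correct and follows essentially the same approach as the paper: identify $\max\mathcal{S}_t = L_{t+1}$ via the telescoping identity, and observe that $\min\mathcal{S}_{t+1} > L_{t+1}$ since the $p_i$ are positive. Your version adds a bit more explicit bookkeeping (the lower endpoint and the monotonicity of $L_t$ for non-adjacent $t,t'$), but the argument is the same.
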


\begin{proof}
The maximum element of $k^{\wedge}P_t$ is the sum of the $k$ largest elements of $P_t$, namely
$p_{M-kt}+\cdots+p_{M-kt-k+1}$. Therefore
\[
\max(\mathcal{S}_t)=L_t+p_{M-kt}+\cdots+p_{M-kt-k+1}=L_{t+1}.
\]
Every element of $\mathcal{S}_{t+1}$ equals $L_{t+1}$ plus a sum of $k$ positive elements, hence is
strictly larger than $L_{t+1}=\max(\mathcal{S}_t)$. This shows
$\max(\mathcal{S}_t)<\min(\mathcal{S}_{t+1})$ and proves the claim.
\end{proof}

By Lemma~\ref{lem:block-disjoint},
\begin{equation}\label{eq:block-sum-general}
\abs{\Sig(D)}\ \ge\ \abs{\Sig(P)}\ \ge\ \sum_{t=0}^{\floor{M/k}-1}\abs{k^{\wedge}P_t}.
\end{equation}

The next lemma will allow us to give a lower bound for each $\abs{k^{\wedge}P_t}$ in terms of $E_{k}(D)$.

\begin{lemma}\label{lem:k-sum-from-energy}
Let $C\subset\RR$ be finite with $\abs{C}=s\ge k$. Then
\[
\abs{k^{\wedge}C}\ \ge\ \frac{(s)_k^2}{E_k(C)},
\]
where $(s)_k=s(s-1)\cdots(s-k+1)$ is the falling factorial function.
\end{lemma}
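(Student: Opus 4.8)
The plan is to prove this by a single application of the Cauchy--Schwarz inequality, in the familiar ``sumset versus additive energy'' form. Set $N:=(s)_k$, the number of ordered $k$-tuples $(x_1,\dots,x_k)\in C^k$ whose entries are pairwise distinct, and for $v\in\RR$ let
\[
r(v):=\#\bigset{(x_1,\dots,x_k)\in C^k\ \text{distinct}:\ x_1+\cdots+x_k=v}
\]
be the corresponding representation function. By construction $r(v)=0$ unless $v\in k^{\wedge}C$, and summing over all $v$ just recounts the tuples, so $\sum_{v}r(v)=N=(s)_k$. (Note $s\ge k$ guarantees $(s)_k>0$, so the statement is not vacuous.)

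Next I would apply Cauchy--Schwarz over the support of $r$, which is contained in $k^{\wedge}C$:
\[
(s)_k^{2}=\Bigl(\sum_{v\in k^{\wedge}C}r(v)\Bigr)^{2}\ \le\ \abs{k^{\wedge}C}\cdot\sum_{v\in k^{\wedge}C}r(v)^{2}.
\]
Finally, I would identify $\sum_v r(v)^2$ combinatorially: it counts the number of pairs of ordered distinct-entry $k$-tuples with equal sums, i.e.\ the number of $(x_1,\dots,x_k,y_1,\dots,y_k)\in C^{2k}$ with $x_1+\cdots+x_k=y_1+\cdots+y_k$ such that $x_1,\dots,x_k$ are distinct and $y_1,\dots,y_k$ are distinct. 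Dropping these two distinctness constraints only enlarges the count, so $\sum_v r(v)^2\le E_k(C)$. Combining the two displays gives $(s)_k^{2}\le \abs{k^{\wedge}C}\,E_k(C)$, which rearranges to the desired bound $\abs{k^{\wedge}C}\ge (s)_k^2/E_k(C)$.

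I do not expect a genuine obstacle here; the argument is essentially forced once one sets up the representation function. The only step deserving a word of care is the last inequality, namely that the ``restricted'' energy counting only distinct-entry $k$-tuples in each block is at most the full energy $E_k(C)$ as defined in the excerpt — but this is immediate, since the former enumerates a sub-collection of the $2k$-tuples enumerated by the latter.
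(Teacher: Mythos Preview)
Your proof is correct and is essentially identical to the paper's own argument: define the representation function for ordered distinct-entry $k$-tuples, note its $\ell^1$-norm is $(s)_k$, apply Cauchy--Schwarz over its support $k^{\wedge}C$, and bound $\sum_v r(v)^2$ above by $E_k(C)$ by dropping the distinctness constraints.
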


\begin{proof}
Let $r_C(x)$ be the number of ordered $k$-tuples of \emph{distinct} elements
$(c_1,\dots,c_k)\in C^k$ with $c_1+\cdots+c_k=x$. Then
\[
\sum_x r_C(x)=(s)_k,
\qquad
\sum_x r_C(x)^2 \le E_k(C).
\]
By Cauchy--Schwarz,
\[
(s)_k^2=\Bigl(\sum_x r_C(x)\Bigr)^2
\le \abs{k^{\wedge}C}\cdot \sum_x r_C(x)^2
\le \abs{k^{\wedge}C}\cdot E_k(C),
\]
which gives the claim.
\end{proof}

Apply Lemma~\ref{lem:k-sum-from-energy} with $C=P_t$ (so $s=M-kt$) and substitute into
\eqref{eq:block-sum-general}:
\[
\abs{\Sig(D)}\ \ge\ \sum_{t=0}^{\floor{M/k}-1}\frac{(M-kt)_k^2}{E_{k}(P_t)} \geq\ \frac{1}{E_k(D)}\sum_{t=0}^{\floor{M/k}-1}(M-kt)_k^2.
\]
In the second inequality we used that $E_k(P) \leq E_k(D)$ for al $t = 0,\ldots,\floor{M/k}-1$. Let us in fact restrict to $0\le t\le \floor{M/(2k)}-1$, for which $M-kt\ge M/2$. Then $(M-kt)_k\gg_k M^k$, and there
are $\gg_k M$ such $t$. Hence the sum is $\gg_k M^{2k+1}$, proving
\[
\abs{\Sig(D)}\gg_k \frac{M^{2k+1}}{E_k(D)}\gg_k \frac{m^{2k+1}}{E_k(D)}.
\]
This completes the proof of Theorem~\ref{thm:support-halasz}.

\section{Disjoint transpositions embed subset sums into $\Sig(A,B)$} \label{sec:subcube}

We now return to the permutation dot-product model. For the reader's convenience, we isolate the following basic (yet crucial) algebraic identity as a Lemma.

\begin{lemma}\label{lem:swap-increment}
Fix $\pi\in S_n$ and positions $i\neq j$. Let $\pi'=\pi\circ (ij)$ be obtained by swapping the images
of $i$ and $j$. Then
\[
S(\pi')-S(\pi)=(a_i-a_j)\bigl(b_{\pi(j)}-b_{\pi(i)}\bigr).
\]
\end{lemma}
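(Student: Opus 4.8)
The plan is a direct computation straight from the definition $S(\pi)=\sum_{k=1}^n a_k b_{\pi(k)}$. The key structural fact is that composing $\pi$ on the right with the transposition $(ij)$ alters the image of exactly two positions: one has $\pi'(i)=\pi\bigl((ij)(i)\bigr)=\pi(j)$, similarly $\pi'(j)=\pi(i)$, and $\pi'(k)=\pi(k)$ for every $k\notin\{i,j\}$. So the first step is to write the difference as a single sum,
\[
S(\pi')-S(\pi)=\sum_{k=1}^n a_k\bigl(b_{\pi'(k)}-b_{\pi(k)}\bigr),
\]
and observe that every summand with $k\notin\{i,j\}$ vanishes term by term.

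What remains is the two-term sum over $k\in\{i,j\}$. Substituting $\pi'(i)=\pi(j)$ and $\pi'(j)=\pi(i)$ gives
\[
S(\pi')-S(\pi)=a_i\bigl(b_{\pi(j)}-b_{\pi(i)}\bigr)+a_j\bigl(b_{\pi(i)}-b_{\pi(j)}\bigr),
\]
and factoring $b_{\pi(j)}-b_{\pi(i)}$ out of both summands yields $(a_i-a_j)\bigl(b_{\pi(j)}-b_{\pi(i)}\bigr)$, which is exactly the asserted identity.

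There is essentially no obstacle here: the only point requiring a moment's care is the composition convention — making sure that $\pi'=\pi\circ(ij)$ really swaps the \emph{images} of $i$ and $j$ rather than doing something to the inputs — but this is settled by the one-line check above. If one prefers a more conceptual phrasing, the same computation can be read off geometrically: re-pairing $a_i,a_j$ with $b_{\pi(i)},b_{\pi(j)}$ replaces the two products $a_ib_{\pi(i)}+a_jb_{\pi(j)}$ by $a_ib_{\pi(j)}+a_jb_{\pi(i)}$, and the difference of these two quantities is precisely the signed area of the rectangle with side lengths $a_i-a_j$ and $b_{\pi(j)}-b_{\pi(i)}$. Either way the proof is a two or three line calculation, and the identity is what motivates viewing $\Sig(A,B)$ through sums of rectangle areas from $(A-A)(B-B)$ in the subsequent sections.
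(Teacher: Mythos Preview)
Your proof is correct and follows exactly the same direct computation as the paper: observe that only the $i$- and $j$-terms change, compute the two-term difference, and factor. The paper's version is just the terse two-line form of what you wrote.
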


\begin{proof}
Only the $i$- and $j$-terms change:
\begin{align*}
S(\pi')-S(\pi)
&=\bigl(a_i b_{\pi(j)}+a_j b_{\pi(i)}\bigr)-\bigl(a_i b_{\pi(i)}+a_j b_{\pi(j)}\bigr)\\
&=(a_i-a_j)\bigl(b_{\pi(j)}-b_{\pi(i)}\bigr).
\end{align*}
\end{proof}

\begin{lemma}\label{lem:disjoint-transpositions}
Fix a base permutation $\pi_0\in S_n$. Let $\tau_t=(i_t\,j_t)$, $t=1,\dots,m$, be pairwise disjoint
transpositions (so all indices $i_1,j_1,\dots,i_m,j_m$ are distinct). Define
\[
\delta_t:=(a_{i_t}-a_{j_t})\bigl(b_{\pi_0(j_t)}-b_{\pi_0(i_t)}\bigr),
\qquad
D:=\{\delta_1,\dots,\delta_m\}.
\]
For each $I\subseteq[m]$, let
\[
\pi_I:=\pi_0\circ\prod_{t\in I}\tau_t.
\]
Then
\[
S(\pi_I)=S(\pi_0)+\sum_{t\in I}\delta_t.
\]
In particular, $\Sig(A,B)$ contains a translate of $\Sig(D)$, which implies $\abs{\Sig(A,B)}\ \ge\ \abs{\Sig(D)}$.
\end{lemma}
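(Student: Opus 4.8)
The plan is to read off $S(\pi_I)$ directly from the definition, using the fact that pairwise disjoint transpositions commute and act on disjoint blocks of coordinates. First I would record the relevant structure of $\sigma_I:=\prod_{t\in I}\tau_t$: because the $2m$ indices $i_1,j_1,\dots,i_m,j_m$ are all distinct, the factors $\tau_t$ ($t\in I$) pairwise commute, so $\sigma_I$ is a well-defined permutation independent of the order of multiplication; moreover $\sigma_I$ fixes every index outside $\bigcup_{t\in I}\{i_t,j_t\}$, while for each $t\in I$ it satisfies $\sigma_I(i_t)=j_t$ and $\sigma_I(j_t)=i_t$. Consequently $\pi_I=\pi_0\circ\sigma_I$ agrees with $\pi_0$ off $\bigcup_{t\in I}\{i_t,j_t\}$, whereas $\pi_I(i_t)=\pi_0(j_t)$ and $\pi_I(j_t)=\pi_0(i_t)$ for every $t\in I$.

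Next I would split $S(\pi_I)=\sum_{i=1}^n a_i b_{\pi_I(i)}$ and $S(\pi_0)=\sum_{i=1}^n a_i b_{\pi_0(i)}$ according to whether $i$ belongs to $\bigcup_{t\in I}\{i_t,j_t\}$. The terms with $i$ outside this set cancel in the difference $S(\pi_I)-S(\pi_0)$. For each fixed $t\in I$, the contribution of the two indices $i_t,j_t$ to $S(\pi_I)-S(\pi_0)$ is $\bigl(a_{i_t}b_{\pi_0(j_t)}+a_{j_t}b_{\pi_0(i_t)}\bigr)-\bigl(a_{i_t}b_{\pi_0(i_t)}+a_{j_t}b_{\pi_0(j_t)}\bigr)=(a_{i_t}-a_{j_t})\bigl(b_{\pi_0(j_t)}-b_{\pi_0(i_t)}\bigr)=\delta_t$, exactly as in Lemma~\ref{lem:swap-increment}. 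Since the blocks $\{i_t,j_t\}$, $t\in I$, are pairwise disjoint, these contributions add up without interference, giving $S(\pi_I)-S(\pi_0)=\sum_{t\in I}\delta_t$. (Alternatively one can enumerate $I$ and iterate Lemma~\ref{lem:swap-increment}: at each step the two positions being swapped have not been touched by the earlier transpositions, so the relevant $b$-values are still the $\pi_0$-values, and telescoping yields the same identity.)

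For the last assertion I would verify that $S(\pi_0)+\Sig(D)\subseteq\Sig(A,B)$. An arbitrary element of $\Sig(D)$ is $\sum_{x\in S}x$ for some $S\subseteq D\subseteq\{\delta_1,\dots,\delta_m\}$; choosing for each $x\in S$ an index $t(x)\in[m]$ with $\delta_{t(x)}=x$ — these indices are distinct because the elements of $S$ are — and putting $I:=\{t(x):x\in S\}$, we obtain $\sum_{t\in I}\delta_t=\sum_{x\in S}x$, so $S(\pi_0)+\sum_{x\in S}x=S(\pi_I)\in\Sig(A,B)$ by the identity just established. As translation by $S(\pi_0)$ is a bijection of $\RR$, we conclude $\abs{\Sig(A,B)}\ge\abs{S(\pi_0)+\Sig(D)}=\abs{\Sig(D)}$.

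There is no serious obstacle here — the argument is essentially bookkeeping. The two points worth a little care are: disjointness is used twice (once to merge the $\tau_t$ into the single permutation $\sigma_I$, and once to make the per-block increments $\delta_t$ superpose without interference); and $\Sig(D)$ is built from $D$ as a \emph{set}, so any coincidences among $\delta_1,\dots,\delta_m$ merely shrink $D$ and do not affect the argument, since every subset sum of $D$ is still realized by an honest index set $I\subseteq[m]$.
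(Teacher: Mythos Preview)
Your proof is correct and essentially the same as the paper's: both note that disjoint transpositions commute so $\pi_I$ is well defined, and both compute $S(\pi_I)-S(\pi_0)$ as the sum of the per-block increments $\delta_t$. The only cosmetic difference is that you compute the difference directly by splitting indices into blocks, whereas the paper telescopes via Lemma~\ref{lem:swap-increment} (the alternative you yourself mention in parentheses); your extra care with possible coincidences among the $\delta_t$ when passing to $\Sigma(D)$ is a nice touch the paper leaves implicit.
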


\begin{proof}
First, note that since the transpositions $\tau_t=(i_t\,j_t)$ are pairwise disjoint, they act on disjoint sets of indices and therefore commute with each other. In particular, the product $\prod_{t\in I}\tau_t$ is independent of the order in which it is written, and $\pi_I$ is well defined.

Now, fix $t\in[m]$ and consider the permutation $\pi_0\circ\tau_t$. By Lemma \ref{lem:swap-increment}, observe that
\begin{align*}
S(\pi_0\circ\tau_t)-S(\pi_0)
&=(a_{i_t}-a_{j_t})\bigl(b_{\pi_0(j_t)}-b_{\pi_0(i_t)}\bigr)
=\delta_t,
\end{align*}
Hence, for every subset $I=\{t_1,\dots,t_k\} \subset [m]$, then
\[
S(\pi_I)-S(\pi_0)
=\sum_{\ell=1}^k \bigl(S(\pi_0\circ\tau_{t_1}\circ\cdots\circ\tau_{t_\ell})
      -S(\pi_0\circ\tau_{t_1}\circ\cdots\circ\tau_{t_{\ell-1}})\bigr)=\sum_{t \in I} \delta_{t}.
\]
In particular, as $I$ ranges over all subsets of $[m]$, the values
\[
S(\pi_I)=S(\pi_0)+\sum_{t\in I}\delta_t
\]
form exactly the translate $S(\pi_0)+\Sig(D)$. Therefore $\Sig(A,B)$ contains a translate of
$\Sig(D)$, and so
\[
\abs{\Sig(A,B)}\ \ge\ \abs{\Sig(D)}.
\]
\end{proof}

\subsection*{Geometric viewpoint}
Let $b=(b_1,\dots,b_n)$ and consider the \emph{permutohedron}
\[
\mathrm{Perm}(b):=\mathrm{conv}\{\,b_\pi:\ \pi\in S_n\,\}\subset\RR^n,
\qquad
b_\pi:=(b_{\pi(1)},\dots,b_{\pi(n)}).
\]
The preceding lemma can be viewed geometrically as follows.  Fix $\pi_0\in S_n$ and pairwise disjoint
transpositions $\tau_t=(i_t\,j_t)$, $t\in[m]$.  Then the $2^m$ vertices
\[
\{\,b_{\pi_I}:\ I\subseteq[m]\,\}\subset \mathrm{Perm}(b)
\]
form the vertex set of an embedded affine $m$--dimensional hypercube (indeed, a product of segments):
toggling $\tau_t$ swaps the $i_t$-th and $j_t$-th coordinates and leaves all other coordinates fixed,
and the disjointness of the pairs $(i_t,j_t)$ ensures that these ``moves'' act on disjoint coordinate
supports.  Concretely, one has
\[
b_{\pi_I}
=
b_{\pi_0}+\sum_{t\in I}\bigl(b_{\pi_0(j_t)}-b_{\pi_0(i_t)}\bigr)\,(e_{i_t}-e_{j_t}),
\]
so the convex hull of $\{b_{\pi_I}:I\subseteq[m]\}$ is a translate of a box generated by the
independent directions $(e_{i_t}-e_{j_t})$.

The linear functional $x\mapsto \langle a,x\rangle$ maps this affine cube to the translate
$S(\pi_0)+\Sigma(D)$, and the scalars $\delta_t$ are precisely the one-dimensional edge increments
of this projection.  In this sense, Lemma~\ref{lem:disjoint-transpositions} is a manifestation of
the fact that the permutohedron contains hypercubes, and our sumset lower bounds arise from
projecting such cubes onto a line.

\section{A lossy Theorem \ref{thm:main} via rectangular area sampling} \label{sec:lossy}

For points $p=(p_1,p_2),q=(q_1,q_2)\in\RR^2$ define the rectangular area
\[
R(p,q):=(q_1-p_1)(q_2-p_2).
\]
For a finite point set $P\subset\RR^2$ define
\[
R(P):=\{R(p,q):p,q\in P\}.
\]

\begin{theorem}[Roche--Newton--Rudnev]\label{thm:RNR}
Let $P\subset\RR^2$ be a set of $N$ points not contained in a single horizontal or vertical line.
Then
\[
\abs{R(P)}\ \gg\ \frac{N}{\log N}.
\]
\end{theorem}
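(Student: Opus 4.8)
The result is \cite[Theorem~1']{RNR}; the plan of proof is as follows. For $\lambda\in\RR$ write $r(\lambda):=\#\{(p,q)\in P\times P:R(p,q)=\lambda\}$, and set $S:=\sum_{\lambda\neq0}r(\lambda)$, the number of ordered pairs of points of $P$ differing in \emph{both} coordinates. The $\lambda\neq0$ with $r(\lambda)>0$ are distinct nonzero elements of $R(P)$, so Cauchy--Schwarz applied to $\{r(\lambda):\lambda\neq0\}$ gives
\[
\abs{R(P)}\ \ge\ \#\{\lambda\neq0:r(\lambda)>0\}\ \ge\ \frac{S^{2}}{E},\qquad
E:=\sum_{\lambda\neq0}r(\lambda)^{2}=\#\bigset{(p,q,p',q')\in P^{4}:R(p,q)=R(p',q')\neq0}.
\]
The plan is to establish the ``energy'' bound $E\ll N^{3}\log N$ unconditionally, and then to split into two cases. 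If $S\ge N^{2}/2$, the display already yields $\abs{R(P)}\gg N^{4}/(N^{3}\log N)=N/\log N$. If instead $S<N^{2}/2$, then $N^{2}-S=\#\{(p,q):p_{1}=q_{1}\}+\#\{(p,q):p_{2}=q_{2}\}-N$ exceeds $N^{2}/2$, so one of the two counts on the right exceeds $N^{2}/4$; since the fibres of the corresponding coordinate projection on $P$ have sizes summing to $N$, some vertical or horizontal line $\ell_{0}$ contains more than $N/4$ points of $P$. Fixing any $w\in P\setminus\ell_{0}$ — which exists precisely because $P$ is not contained in a single horizontal or vertical line — the rectangular areas of $w$ with the points of $P$ on $\ell_{0}$ (these are $(w_{1}-x_{0})(y-w_{2})$ with $y$ ranging over the second coordinates of those points, if $\ell_{0}=\{x=x_{0}\}$, and symmetrically otherwise) take more than $N/4-1$ distinct nonzero values, so $\abs{R(P)}\gg N$. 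Either way the theorem follows, so everything reduces to the bound $E\ll N^{3}\log N$.

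For that bound, the key point is that, given $p\neq q$ and $p'\neq q'$ in $P$ with $R(p,q)=R(p',q')\neq0$, there is a \emph{unique} affine map $\phi$ of the ``axis-aligned, area-preserving'' form $(x,y)\mapsto(\alpha x+\beta,\ \alpha^{-1}y+\gamma)$ with $\alpha\neq0$ such that $\phi(p)=p'$ and $\phi(q)=q'$: comparing first coordinates forces $\alpha=(p_{1}'-q_{1}')/(p_{1}-q_{1})$, the requirement that the second coordinates also be consistent is exactly the equality of rectangular areas, and then $\beta,\gamma$ are determined. These maps form a $3$-dimensional group $G$; for each pair $(p,p')\in P^{2}$ the locus $\{\phi\in G:\phi(p)=p'\}$ is a one-dimensional curve in $G$, and one has $E\le\sum_{\phi\in G}n(\phi)^{2}$, where $n(\phi):=\#\{p\in P:\phi(p)\in P\}$ is precisely the number of these (at most $N^{2}$) curves passing through $\phi$. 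After a rational change of coordinates on $G$ — in the spirit of the coordinatization of the group of rigid motions used by Guth and Katz for the distinct-distances problem \cite{GuthKatz15} — the curves become genuine lines in $\RR^{3}$; one then verifies, using the hypothesis on $P$, that no plane and no regulus contains more than $\ll N$ of these lines, so the Guth--Katz point--line incidence bound applies and shows that the number of $\phi$ lying on at least $r$ of the lines is $\ll N^{3}/r^{2}+N^{2}/r$. Summing $r^{2}$ against this dyadically over $1\le r\le N$ gives $E\ll N^{3}\log N$, the single logarithm being the only loss.

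The main obstacle here is precisely the middle of the previous paragraph: producing the linearizing parametrization of $G$ and checking that the resulting line family has no large coplanar or co-regulus subfamily — this is where the hypothesis on $P$ is genuinely needed, and it is the part carried out in detail in \cite{RNR}. An alternative routing, which the reader may prefer, runs the same energy estimate through Rudnev's point--plane incidence theorem in place of the Guth--Katz line theorem, as indicated in \cite{BloomWalker19}; since the point--plane theorem can itself be deduced from the Szemer\'edi--Trotter theorem, this avoids any appeal to ``deep'' three-dimensional incidence geometry.
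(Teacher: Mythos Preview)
The paper does not actually prove this theorem: it is quoted as a black box, with only the one-line remark that it ``is a specialization of \cite[Theorem~1$'$]{RNR}, proved via a Cauchy--Schwarz argument and a bound on `rectangular quadruples' (see \cite[Proposition~4]{RNR}).'' Your sketch is fully consistent with that description --- the Cauchy--Schwarz step is your display $\abs{R(P)}\ge S^{2}/E$, and the ``rectangular quadruples'' bound is exactly your $E\ll N^{3}\log N$ --- and it is in fact a faithful outline of the argument in \cite{RNR}, including the symmetry-group interpretation and the reduction to a Guth--Katz-type rich-point estimate for lines in $\RR^{3}$. The case split on $S$ and the pigeonhole extraction of a heavy axis-parallel line when $S<N^{2}/2$ are also correct as written.

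One minor inaccuracy in your final aside: you describe the Bloom--Walker alternative \cite{BloomWalker19} as passing through Rudnev's point--plane theorem and then remark that the latter ``can itself be deduced from the Szemer\'edi--Trotter theorem.'' That last clause is not right --- Rudnev's point--plane bound genuinely needs polynomial-method input --- and the paper's own characterization is that the Bloom--Walker route ``only relies on the Szemer\'edi--Trotter theorem'' directly, bypassing three-dimensional incidence geometry altogether. This is tangential to the theorem itself and does not affect the correctness of your sketch.
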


Theorem~\ref{thm:RNR} is a specialization of \cite[Theorem~1$'$]{RNR}, proved via a Cauchy--Schwarz
argument and a bound on ``rectangular quadruples'' (see \cite[Proposition~4]{RNR}).

We will be applying Theorem~\ref{thm:RNR} to the grid $P=A\times B$ (so $N=n^2$). Then
\[
R(A\times B)=\{(a'-a)(b'-b):a,a'\in A,\ b,b'\in B\}=(A-A)(B-B),
\]
and hence
\begin{equation}\label{eq:AA-BB-lower}
\abs{(A-A)(B-B)}\ \gg\ \frac{n^2}{\log n}.
\end{equation}

\medskip
\subsection*{Game plan} We will construct $m$ disjoint swap increments $\delta_1,\dots,\delta_m$ from $(A-A)(B-B)$ so that
their additive energy satisfies $E_2(D)\ll m^2$. Using Theorem \ref{thm:RNR}, will be able to do this for $m \asymp n/\sqrt{\log n}$. Once this is achieved, the case $k=2$ of
Theorem~\ref{thm:support-halasz} will yield $\abs{\Sig(D)}\gg m^3 \asymp n^{3}/(\log n)^{3/2}$, and then
Lemma~\ref{lem:disjoint-transpositions} gives $\abs{\Sig(A,B)}\ge \abs{\Sig(D)}$. 

\smallskip

For now, let's discuss how to find the set $D = \left\{\delta_1,\ldots,\delta_m\right\}$ with the desired properties above. We fix
\begin{equation}\label{eq:def-M}
R := c_0\,\frac{n^2}{\log n},
\end{equation}
with $c_0>0$ small enough that any subgrid $A_t\times B_t$ with $\abs{A_t},\abs{B_t}\ge n/2$ satisfies
\(
\abs{(A_t-A_t)(B_t-B_t)}\ge R.
\)

\subsection*{Algorithm}

Initialize $A_0:=A$, $B_0:=B$. For $t=1,2,\dots,m$, we run the following procedure:
\begin{enumerate}
\item Let $P_{t-1}:=A_{t-1}\times B_{t-1}$ and choose any subset $U_{t-1}\subseteq R(P_{t-1})\setminus\{0\}$ with $\abs{U_{t-1}}=R$.
\item Choose $\delta_t$ uniformly at random from $U_{t-1}\setminus\{\delta_1,\dots,\delta_{t-1}\}$.
\item Choose a pair of points
\[
p=(a_{j_t},b_{p_t}),\quad q=(a_{i_t},b_{q_t})\in P_{t-1}
\quad\text{such that}\quad
\delta_t=R(p,q)=(a_{i_t}-a_{j_t})(b_{q_t}-b_{p_t}).
\]
\item Remove the used elements from $A_{t-1}$ and $A_{t}$:
\[
A_t:=A_{t-1}\setminus\{a_{i_t},a_{j_t}\},\qquad
B_t:=B_{t-1}\setminus\{b_{p_t},b_{q_t}\}.
\]
\item Repeat.
\end{enumerate}

After $m$ steps, the index pairs $(i_t,j_t)$ are disjoint and $(p_t,q_t)$ are disjoint, and
$D:=\{\delta_1,\dots,\delta_m\}$ is a set of size $m$. Also, since $m = o(n)$ this process will certainly not stop earlier than it should. Also, we always maintain the property that
$\abs{A_{t}},\abs{B_{t}}\ge n/2$, hence, by the definition of $R$, we will always be able to find $U_{t} \subset R(P_{t-1}) \setminus \left\{0\right\}$ with $|U_{t-1}| = R$. 

\begin{lemma}\label{lem:expected-energy}
For $D$ constructed above,
\[
\EE\,E_2(D)\ \ll\ m^2+\frac{m^4}{R}.
\]
\end{lemma}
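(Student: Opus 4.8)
The plan is to bound the expectation of $E_2(D)$ by splitting the additive-energy fourfold count $\#\{(\delta_a,\delta_b,\delta_c,\delta_d)\in D^4 : \delta_a+\delta_b=\delta_c+\delta_d\}$ according to how many of the four indices coincide. The diagonal contributions — where the multiset $\{a,b\}$ equals $\{c,d\}$, or more generally where the four-tuple collapses onto at most two distinct indices — contribute $O(m^2)$ deterministically, simply because $|D|=m$ and there are $O(m^2)$ such degenerate tuples; no randomness is needed here. So the real content is to show that the expected number of \emph{nondegenerate} solutions, those with $a,b,c,d$ representing (up to the symmetry $\{a,b\}\neq\{c,d\}$) essentially four free choices, is $O(m^4/R)$.

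For the nondegenerate part I would condition on a pair of steps and exploit the fact that, at the moment $\delta_t$ is drawn in step (2), it is chosen uniformly from a set $U_{t-1}\setminus\{\delta_1,\dots,\delta_{t-1}\}$ of size at least $R-m \gg R$ (using $m=o(R)$, which holds since $R\asymp n^2/\log n$ and $m\asymp n/\sqrt{\log n}$). The key observation is a \emph{martingale-style} or sequential-revelation argument: order the four indices of a putative solution, say $t_1<t_2<t_3<t_4$, reveal $\delta_{t_1},\delta_{t_2},\delta_{t_3}$ (and all intermediate randomness), and note that the equation $\delta_a+\delta_b=\delta_c+\delta_d$ then pins down $\delta_{t_4}$ to a single value; since $\delta_{t_4}$ is drawn uniformly from a pool of size $\gg R$ \emph{independently of} whatever earlier reasoning led us here, the conditional probability that it hits that prescribed value is $O(1/R)$. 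Summing over the $O(m^3)$ ordered triples of earlier indices (the fourth being determined as the "latest" slot) gives the $O(m^4/R)$ bound — one needs to be a little careful that the determined slot $t_4$ really is the last one drawn, but one can always relabel so that the largest index plays the role of the "solved-for" variable, and the constraint $\delta_a+\delta_b-\delta_c-\delta_d=0$ is linear in each $\delta$, so whichever index is largest, its value is forced by the other three.

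Concretely the steps are: (i) expand $E_2(D)=\sum$ over $4$-tuples and separate the degenerate tuples (at most two distinct indices among $a,b,c,d$), bounding their count by $O(m^2)$; (ii) for each nondegenerate pattern, sort the four indices and write the defining linear equation with the largest index isolated; (iii) take expectation, peeling off the randomness in reverse chronological order and using that, conditioned on everything preceding step $t_4$, the value $\delta_{t_4}$ is uniform on a set of size $\ge R - m \gg R$, so the probability it equals the one forced value is $\le 1/(R-m) \ll 1/R$; (iv) sum $O(1/R)$ over the $O(m^3)$ choices for the first three indices (with the fourth ranging over later steps, still only $O(m)$ of them, or simply bound by $O(m^4)$ total tuples each contributing $O(1/R)$) to get $O(m^4/R)$; (v) add the two contributions. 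I expect the main obstacle to be step (iii): phrasing the conditioning cleanly so that the "forced value" really is a single point and the drawing of $\delta_{t_4}$ is genuinely uniform and independent of the conditioning event — in particular, checking that removing the already-used increments $\{\delta_1,\dots,\delta_{t-1}\}$ from $U_{t-1}$ only shrinks the pool by at most $m$ and never drops it below $\gg R$, and that no hidden dependence (e.g. through the choice of $U_{t-1}$, which may itself depend on earlier randomness via $A_{t-1},B_{t-1}$) breaks the uniformity of the \emph{final} draw given the past. Once the conditioning is set up correctly, the rest is a routine union bound.
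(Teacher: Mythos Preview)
Your proposal is correct and follows essentially the same route as the paper: split $E_2(D)$ into trivial quadruples (the patterns $(x,y,x,y)$ and $(x,y,y,x)$, contributing $O(m^2)$) and nontrivial ones, then for each nontrivial quadruple condition on the history up to time $t_*-1$ where $t_*$ is the largest index, so that $\delta_{t_*}$ is uniform on a set of size $\ge R/2$ and must hit at most one prescribed value. Your worries in step~(iii) are exactly the right ones to check, and they resolve just as you indicate: the pool at step $t_*$ has size at least $R-(t_*-1)\ge R-m\ge R/2$, and since $U_{t_*-1}$ is determined by the history up to time $t_*-1$, the conditional law of $\delta_{t_*}$ given that history is genuinely uniform on $U_{t_*-1}\setminus\{\delta_1,\dots,\delta_{t_*-1}\}$.
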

In particular, if $m=c\sqrt{R}$ with $c$ small enough, then $m^4/R\asymp c^2m^2$,
so $\EE E_2(D)\ll m^2$, and Markov's inequality gives an outcome with $E_2(D)\ll m^2$. By \eqref{eq:def-M}, this means that $m$ is on the order of $n/\sqrt{\log n}$.

\begin{proof}
Write $E_2(D)=T_{\mathrm{triv}}+T_{\mathrm{nontriv}}$, where $T_{\mathrm{triv}}$ counts solutions of
$x_1+x_2=x_3+x_4$ with $(x_1,x_2,x_3,x_4)=(x,y,x,y)$ or $(x,y,y,x)$. Then $T_{\mathrm{triv}}\le 2m^2$.

Now fix an ordered index quadruple $(\alpha,\beta,\gamma,\delta)\in[m]^4$ which is not of these two
trivial patterns. Let $t_*:=\max\{\alpha,\beta,\gamma,\delta\}$. Conditioning on the history up to
time $t_*-1$, all $\delta_t$ for $t<t_*$ are fixed, while $\delta_{t_*}$ is uniform on a set of size
$\ge R/2$ (since $t_*\le m\ll R$ and we exclude previously chosen values).
The equation $\delta_\alpha+\delta_\beta=\delta_\gamma+\delta_\delta$ then forces $\delta_{t_*}$ to
equal at most one prescribed value, so
\[
\PP(\delta_\alpha+\delta_\beta=\delta_\gamma+\delta_\delta\mid\text{history up to }t_*-1)\ \ll\ \frac{1}{R}.
\]
Summing over $O(m^4)$ nontrivial index quadruples yields $\EE T_{\mathrm{nontriv}}\ll m^4/R$.
Hence $\EE E_2(D)\ll m^2+m^4/R$. 
\end{proof}

\subsection*{A lossy version of Theorem \ref{thm:main}} \label{sec:lossy}

Let $D=\{\delta_1,\dots,\delta_m\}$ be a deterministic outcome of the above construction with
$E_2(D)\ll m^2$, guaranteed by Lemma~\ref{lem:expected-energy}. Recall that $m \asymp m / \sqrt{\log n}$. 

We now realize these increments as disjoint-transposition increments for a suitable base permutation $\pi_0$. Define $\pi_0\in S_n$ by
\[
\pi_0(i_t)=p_t,\qquad \pi_0(j_t)=q_t\qquad (t=1,\dots,m),
\]
and extend arbitrarily on the remaining indices. Then
\[
(a_{i_t}-a_{j_t})\bigl(b_{\pi_0(j_t)}-b_{\pi_0(i_t)}\bigr)
=(a_{i_t}-a_{j_t})(b_{q_t}-b_{p_t})=\delta_t,
\]
so Lemma~\ref{lem:disjoint-transpositions} yields
\[
\abs{\Sig(A,B)}\ \ge\ \abs{\Sig(D)}.
\]
Apply Theorem~\ref{thm:support-halasz} with $k=2$:
\[
\abs{\Sig(D)}\ \gg\ \frac{m^5}{E_2(D)}\ \gg\ m^3 \asymp \frac{n^{3}}{(\log n)^{3/2}}.
\]
Putting things together, we conclude that $|\Sigma(A,B)| \gg n^{3}/(\log n)^{3/2}$. \qed

\section{Proof of Theorem \ref{thm:main}: Removing the logarithmic loss} \label{sec:cubic}

As already alluded to in Section \ref{sec:intro}, the logarithmic loss in the argument from Section \ref{sec:lossy} comes from the fact that the one-switch increment
set $(A-A)(B-B)$ can be as small as $n^2/(\log n)^{\delta_0+o(1)}$, where $\delta_0 = 0.86071\ldots$ denotes the Erd\H{o}s-Tenenbaum-Ford constant. A simple way to
bypass this obstruction is to use, as a single ``cube direction'', the product of two disjoint
transpositions.  The resulting increment is a sum of two rectangle areas, and therefore lies
in the two--area set
\[
(A-A)(B-B)+(A-A)(B-B).
\]
The main point is that this expander turns out to be quadratically large, with no logarithmic loss. We record this important first observation as a lemma.

\begin{lemma}\label{lem:two-area-warmup}
Let $A,B\subset\RR$ be finite sets with $|A|,|B| \geq 2$. Then
\[
\bigl|(A-A)(B-B)+(A-A)(B-B)\bigr|\ \ge\ \frac{|A||B|}{2}.
\]
\end{lemma}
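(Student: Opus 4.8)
The plan is to exhibit, inside $(A-A)(B-B)+(A-A)(B-B)$, an explicit family of about $|A|\,|B|$ values indexed by a pair of integers, organized so that a ``coarse'' index determines an interval on the line and a ``fine'' index places a value inside that interval. This is a one-dimensional incarnation of Solymosi's sector argument~\cite{Solymosi09}: the real line plays the role of the plane, and the intervals below play the role of the angular sectors.

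Write $n:=|A|$ and $k:=|B|$, sort $A=\{a_1<\cdots<a_n\}$ and $B=\{b_1<\cdots<b_k\}$, and set $\beta:=b_k-b_1>0$. For $i\in\{1,\dots,n-1\}$ and $j\in\{1,\dots,k\}$ I would look at
\[
v_{i,j}\ :=\ (a_n-a_{i+1})(b_k-b_1)\ +\ (a_{i+1}-a_i)(b_j-b_1).
\]
Each of the two summands is a rectangle area (an $A$-difference times a $B$-difference), so $v_{i,j}\in(A-A)(B-B)+(A-A)(B-B)$. The first summand depends only on $i$, while the second is nonnegative, strictly increasing in $j$, and bounded above by $(a_{i+1}-a_i)\beta$. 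Hence for each fixed $i$ the $k$ numbers $v_{i,1}<\cdots<v_{i,k}$ all lie in the interval
\[
J_i\ :=\ \bigl[(a_n-a_{i+1})\beta,\ (a_n-a_i)\beta\bigr],
\]
where the right endpoint comes from the telescoping identity $(a_n-a_{i+1})\beta+(a_{i+1}-a_i)\beta=(a_n-a_i)\beta$.

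The structural point is that $J_{n-1},\dots,J_1$ are consecutive intervals (ordered left to right) meeting only at endpoints: $\max J_{i+1}=(a_n-a_{i+1})\beta=\min J_i$, and $J_i\cap J_{i'}=\varnothing$ as soon as $|i-i'|\ge 2$. Therefore the only possible coincidences among the $v_{i,j}$ happen at a common endpoint of two adjacent blocks, and a direct check shows that the left endpoint of $J_i$ is attained only at $j=1$ (which forces $b_j=b_1$) and the right endpoint only at $j=k$ (which forces $b_j=b_k$). So the complete list of coincidences is $v_{i,1}=v_{i+1,k}$ for $i=1,\dots,n-2$; since $k\ge 2$, these $n-2$ identifications form a matching on the index pairs and so cut the count down by exactly $n-2$. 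Consequently the number of distinct values produced is at least $(n-1)k-(n-2)=nk-n-k+2$, and because
\[
nk-n-k+2-\tfrac{nk}{2}\ =\ \tfrac{(n-2)(k-2)}{2}\ \ge\ 0
\]
whenever $|A|,|B|\ge 2$, this is at least $\tfrac{|A|\,|B|}{2}$, as required.

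The only step needing genuine care is this endpoint bookkeeping: one must confirm that each shared endpoint $\min J_i=\max J_{i+1}$ is hit by exactly one value from block $i$ and exactly one from block $i+1$, so that the overlap between adjacent blocks costs precisely one value and the total loss is exactly $n-2$ rather than something larger. Everything else is forced by the orderings of $A$ and $B$.
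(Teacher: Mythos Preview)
Your proof is correct, and it takes a genuinely different (and in fact slightly sharper) route than the paper's.

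The paper fixes two extreme scales: the minimum gap $\delta=\min_i(a_{i+1}-a_i)$ and the diameter $M=b_m-b_1$, then shows that $(x,y)\mapsto \delta x+My$ is injective on $P\times P'$ with $P=\{b_i-b_1\}$ and $P'=\{a_{2j-1}-a_1\}$. The restriction to odd indices is what forces consecutive elements of $P'$ to differ by at least $2\delta$, which is what drives the injectivity, and it is also what costs the factor $2$ in the final count $|B|\cdot\lceil |A|/2\rceil$.

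Your argument instead uses the \emph{variable} gaps $a_{i+1}-a_i$ and the telescoping identity $\sum_i(a_{i+1}-a_i)=a_n-a_1$, so that the blocks $J_i$ tile $[0,(a_n-a_1)\beta]$ exactly, meeting only at endpoints. This avoids throwing away half of $A$ and yields the sharper count $(n-1)k-(n-2)$, which you then check dominates $nk/2$. The endpoint bookkeeping you flag is indeed the only delicate point, and your verification that for $k\ge 2$ the identifications $v_{i,1}=v_{i+1,k}$ form a matching on index pairs is correct. Both arguments are elementary and in the same Solymosi spirit; yours is closer to the original ``consecutive elements'' idea, while the paper's trades that for a single clean min-gap/diameter injectivity step.
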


We note that Lemma \ref{lem:two-area-warmup} is optimal, up to the leading constant. Indeed, it is not difficult to see that for $A=B=[n]$, we have that
$$(A-A)(B-B)+(A-A)(B-B) \subset [-2(n-1)^2,\,2(n-1)^2].$$

\begin{proof} 
Write
$$A=\{a_1<a_2<\cdots<a_n\},\qquad B=\{b_1<b_2<\cdots<b_m\},$$
and let $\delta := \min_{i}(a_{i+1}-a_i)$ and $M := b_m-b_1$. In words, $\delta$ represents the smallest gap in $A$ and $M$ is the the diameter of the set $B$. In particular, $\delta\in A-A$ and $M\in B-B$. Furthermore, let
\begin{align*}
P&:=\{b_i-b_1:\ 1\le i\le m\}\subseteq B-B,\\
P'&:=\{a_{2j-1}-a_1:\ 1\le j\le \lceil n/2\rceil\}\subseteq A-A.
\end{align*}
We have $|P|=m$ and $|P'|=\lceil n/2\rceil$. Moreover, $P \subset [0,M]$ and any two distinct elements of $P'$ differ by at least $2\delta$. We now consider the following special subset of $(A-A)(B-B)+(A-A)(B-B)$:
\[
T:=\delta P + M P'=\{\delta x+My:\ x\in P,\ y\in P'\}.
\]
Since $\delta\in A-A$ and $P\subseteq B-B$, we have $\delta P\subseteq (A-A)(B-B)$, and similarly
$MP'\subseteq (A-A)(B-B)$. Hence we indeed have that $T\subseteq (A-A)(B-B)+(A-A)(B-B)$.

The key idea is that the map $P \times P' \to T$ defined by $(x,y)\mapsto \delta x+My$ is injective. Indeed, if
$\delta x_1+My_1=\delta x_2+My_2$, then $M(y_1-y_2)=\delta(x_2-x_1)$.  If $y_1\neq y_2$ then
$|y_1-y_2|\ge 2\delta$, so $|M(y_1-y_2)|\ge 2\delta M$, whereas $x_1,x_2\in P\subseteq[0,M]$ implies
$|\delta(x_2-x_1)|\le \delta M$, a contradiction.  Thus $y_1=y_2$ and then $x_1=x_2$.

It follows that
$$(A-A)(B-B)+(A-A)(B-B) \geq |T| \geq |P|\cdot |P'| = m\cdot \Bigl\lceil\frac{n}{2}\Bigr\rceil \geq \frac{mn}{2}.$$
\end{proof}

\medskip
With Lemma \ref{lem:two-area-warmup} in hand, the plan from now on will simply be to execute the exact same proof strategy from Section \ref{sec:lossy} but with $(A-A)(B-B)+(A-A)(B-B)$ instead of $(A-A)(B-B)$. 

The first technicality that we have to address is the fact that in the paired--switch argument from before we needed slightly more than the size of $|(A-A)(B-B)|$ as input. Similarly, here we will also need that each increment $\delta_t$ to come from a single commuting generator $\sigma_t=(i_t\,j_t)(k_t\,\ell_t)$, i.e.\ it must admit a representation
\[
\delta_t=(a_i-a_j)(b_p-b_q)+(a_k-a_\ell)(b_r-b_s)
\]
with \emph{four distinct} $A$--indices $\{i,j,k,\ell\}$ and \emph{four distinct} $B$--indices
$\{p,q,r,s\}$ (so that the two underlying transpositions are disjoint). The next lemma provides such a technical refinement of Lemma \ref{lem:two-area-warmup}. 

\begin{lemma}\label{lem:two-area-pool}
Let $A,B\subset\RR$ be finite sets with $|A|=n\ge 4$ and $|B|=m\ge 4$. Then there exists a set
\[
U(A,B)\ \subseteq\ (A-A)(B-B)+(A-A)(B-B)
\]
with
\[
|U(A,B)|\ \ge\ (m-3)\Bigl(\Bigl\lceil\frac n2\Bigr\rceil-2\Bigr)\ \ge\ \frac{nm}{8},
\]
such that every $u\in U(A,B)$ admits a representation
\[
u=(a_i-a_j)(b_p-b_q)\;+\;(a_k-a_\ell)(b_r-b_s)
\]
in which $i,j,k,\ell$ are \emph{distinct} indices and $p,q,r,s$ are \emph{distinct} indices.
\end{lemma}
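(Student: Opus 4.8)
The plan is to retrace the proof of Lemma~\ref{lem:two-area-warmup} but shrink the ingredient sets $P$ and $P'$ so that their defining index sets no longer overlap, leaving room to pick four distinct $A$-indices and four distinct $B$-indices for each element of the resulting set $T$. Concretely: keep $\delta = \min_i(a_{i+1}-a_i)$ attained at some consecutive pair $a_{i_0}, a_{i_0+1}$, and keep $M = b_m - b_1$. The first rectangle area in each representation will always use the fixed index pairs $(i_0, i_0+1)$ on the $A$-side and $(1,m)$ on the $B$-side; since these are fixed, we just need the \emph{second} rectangle area to range over a set of the form (difference on the $A$-side) times (difference on the $B$-side) avoiding those indices. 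So I would set
\[
P := \{b_i - b_1 : 2 \le i \le m-1\} \subseteq B-B, \qquad
P' := \{a_{2j-1} - a_2 : 2 \le j \le \lceil n/2\rceil\} \subseteq A-A,
\]
or some similar truncation that (a) excludes the indices $\{i_0, i_0+1\}$ from the $A$-side and excludes $\{1,m\}$ from the $B$-side, (b) still has $|P| \ge m-3$ and $|P'| \ge \lceil n/2\rceil - 2$, and (c) preserves the two structural facts used in the injectivity argument: $P \subseteq [0,M]$ and any two distinct elements of $P'$ differ by at least $2\delta$. The precise bookkeeping of which two or three indices to drop on each side is the only fiddly part; dropping a constant number of elements only costs a constant additive loss, which is absorbed into the stated bounds $(m-3)(\lceil n/2\rceil - 2) \ge nm/8$ (valid for $n,m \ge 4$, after checking the small cases).

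Then I would define
\[
U(A,B) := \delta\bigl(\{b_1\text{-gap}\}\bigr) + \text{(the \ref{lem:two-area-warmup}-style set)}
= \bigl\{ \delta M + (\delta x + M y) : x \in P,\ y \in P' \bigr\}
\]
—more honestly, $U(A,B) := \{\delta(b_m - b_1) + \delta x + M y : x \in P,\ y\in P'\}$ is a fixed shift of $T = \delta P + M P'$, so $|U(A,B)| = |T|$. Here each element $u$ has the explicit representation
\[
u = (a_{i_0+1} - a_{i_0})(b_m - b_1) + (a_{2j-1} - a_2)(b_i - b_1),
\]
and by construction the $A$-indices $\{i_0, i_0+1, 2j-1, 2\}$ are four distinct indices (because we deliberately excluded $i_0, i_0+1$ from the range of $2j-1$ and of $2$), and likewise the $B$-indices $\{1, m, i, 1\}$—wait, that repeats $1$. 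This is the genuine obstacle: the naive first-rectangle choice reuses $b_1$. To fix it, I would instead let the first rectangle area on the $B$-side use a pair \emph{disjoint} from $\{1\}$ and from the range of the index $i$: e.g.\ use $b_m - b_2$ in the first area and let $P := \{b_i - b_3 : 4 \le i \le m-1\}$, or relabel so that the "anchor" difference $M' := b_{m-1} - b_2$ (still comparable to the diameter, hence still $> 0$ and large enough for the gap argument) uses indices $\{2, m-1\}$ while $P$ uses indices from $\{3, \dots, m-2, m\}\cup\{1\}$ minus whatever is needed. As long as $M'$ is at least, say, $\tfrac{1}{3}$ of the true diameter — which fails only in degenerate configurations that can be handled separately — the injectivity inequality $|M'(y_1-y_2)| \ge 2\delta M' > \delta M' \ge |\delta(x_2 - x_1)|$ still goes through after replacing $M$ by $M'$ and noting $P \subseteq [0, M']$ (arranged by choosing which translate of $B$ to use).

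So the key steps, in order, are: (1) fix a minimal-gap pair in $A$ and a near-diametral pair in $B$, reserving their four indices; (2) define truncated $P \subseteq B-B$ and $P' \subseteq A-A$ using only indices disjoint from the reserved ones, while keeping $|P| \ge m-3$, $|P'| \ge \lceil n/2\rceil - 2$, $P \subseteq [0, M']$, and gaps in $P'$ at least $2\delta$; (3) form $U(A,B)$ as a fixed translate of $\delta P + M' P'$, which lies in the two-area set because each of $\delta P$ and $M' P'$ lies in one copy of $(A-A)(B-B)$ and the translating term is itself a single rectangle area (fold it into one of the two copies, or absorb it — since we have \emph{four} distinct indices to play with on each side we can always write the shift as part of one of the two rectangles); (4) run the same injectivity argument as in Lemma~\ref{lem:two-area-warmup}, with $M$ replaced by $M'$, to conclude $|U(A,B)| = |P|\,|P'| \ge (m-3)(\lceil n/2\rceil - 2)$; (5) verify $(m-3)(\lceil n/2\rceil - 2) \ge nm/8$ for $n, m \ge 4$ by a short direct estimate. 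I expect step (3)/(1) — arranging all eight indices (four on each side) to be simultaneously distinct \emph{and} keeping the anchor difference $M'$ large enough for the separation argument — to be the main obstacle; everything else is the same computation as before with slightly worse constants.
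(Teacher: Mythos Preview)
Your overall strategy is the paper's: recycle the proof of Lemma~\ref{lem:two-area-warmup}, reserving the minimal-gap pair in $A$ and a diametral pair in $B$ for one rectangle, and drawing the other rectangle from truncated sets $P\subseteq B-B$, $P'\subseteq A-A$ that avoid those reserved indices. You also correctly spot the genuine obstacle, namely the $B$-index collision at $b_1$.

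Where you go astray is in the fix. Replacing $M=b_m-b_1$ by $M'=b_{m-1}-b_2$ is not harmless: the injectivity argument needs $P\subseteq[0,M']$, but if you then let $P$ range over differences involving $b_1$ or $b_m$ (as you suggest), this can fail badly, and $M'$ need \emph{not} be any fixed fraction of the diameter (take $B=\{0,1,1+\varepsilon,\dots,1+(m-3)\varepsilon,K\}$ with $K$ huge and $\varepsilon$ tiny). The ``degenerate configurations'' you set aside are not a side case; they are the generic case, and you have no mechanism to handle them. The extra translating term you introduce is also unnecessary and is the source of the muddled representation you wrote down (which, as written, does not equal $\delta x+My$).

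The clean fix is simpler than anything you tried: keep $M=b_m-b_1$ untouched and merely move the anchor of $P$ from $b_1$ to $b_2$, setting
\[
P:=\{\,b_i-b_2:\ 3\le i\le m-1\,\}.
\]
Then automatically $P\subset(0,M)$ (since $b_i-b_2<b_m-b_1$), so the injectivity argument from Lemma~\ref{lem:two-area-warmup} goes through verbatim with the original $M$; and the $B$-indices in the representation
\[
u=(a_{s+1}-a_s)(b_i-b_2)+(a_t-a_\star)(b_m-b_1)
\]
split as $\{i,2\}$ and $\{m,1\}$, which are disjoint because $3\le i\le m-1$. No near-diametral surrogate, no translate, no case analysis. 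On the $A$-side the paper does essentially what you sketched: pick an anchor $a_\star\notin\{a_s,a_{s+1}\}$, take odd indices, and delete (at most two of) the indices $s,s+1,\star$ to form $P'$, giving $|P'|\ge\lceil n/2\rceil-2$.
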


The proof is almost identical (with just some additional book-keeping), but we include the full proof for the reader's convenience. 

\begin{proof}
Like in the proof of Lemma \ref{lem:two-area-warmup}, write $A=\{a_1<\cdots<a_n\}$ and $B=\{b_1<\cdots<b_m\}$, and define 
\[
\delta:=\min_{1\le t\le n-1}(a_{t+1}-a_t)>0,
\qquad
M:=b_m-b_1>0.
\]
Set
\[
P:=\{\,b_i-b_2:\ 3\le i\le m-1\,\}\subseteq B-B,
\]
so $|P|=m-3$, and each $x\in P$ is represented by a difference $b_i-b_2$ using indices in
$\{2,3,\dots,m-1\}$, hence disjoint from $\{1,m\}$.

Choose an index $s \in [n-1]$ with $a_{s+1}-a_s=\delta$, and select an anchor $a_\star\in A\setminus\{a_s,a_{s+1}\}$. For concreteness, take $a_\star=a_1$ if
$a_1\notin\{a_s,a_{s+1}\}$ and otherwise take $a_\star=a_n$.  Let $I$ be the set of odd indices
$2j-1$ with $1\le j\le\lceil n/2\rceil$, and delete from $I$ any indices corresponding to the elements $\{a_s,a_{s+1}\}$ and also delete the anchor index itself.
Define
\[
P':=\{\,a_t-a_\star:\ t\in I\,\}\subseteq A-A.
\]
Then $|P'|\ge \lceil n/2\rceil-2$.  Moreover, any two distinct elements of $P'$ differ by at least
$2\delta$. Like in the proof of Lemma \ref{lem:two-area-warmup}, consider
\[
U:=\delta P+MP'=\{\delta x+My:\ x\in P,\ y\in P'\} \subset (A-A)(B-B)+(A-A)(B-B).
\]
The map $(x,y)\mapsto \delta x+My$ is still an injective map on $P\times P'$, therefore 
$$|U| \geq |P||P'|\ge (m-3)(\lceil n/2\rceil-2).$$

Most importantly, note that every $u=\delta x+My\in U$ admits a representation using disjoint indices: write
$x=b_i-b_2$ with $3\le i\le m-1$ and write $y=a_t-a_\star$ with $a_t\notin\{a_s,a_{s+1}\}$ and
$a_\star\notin\{a_s,a_{s+1}\}$.  Then
\[
u=(a_{s+1}-a_s)(b_i-b_2)\;+\;(a_t-a_\star)(b_m-b_1),
\]
and the $A$--indices $\{s,s+1,t,\star\}$ are distinct by construction, while the $B$--indices
$\{i,2,m,1\}$ are distinct.  Taking $U(A,B):=U$ completes the proof.
\end{proof}

\medskip

\subsection*{Paired switches give cubes and subset sums}
The next lemma is the analogue of Lemma~\ref{lem:disjoint-transpositions} when each generator is a
product of two disjoint transpositions. The point is that the paired transpositions still give a translate of $\Sigma(D)$ inside $\Sigma(A,B)$. 

\begin{lemma}\label{lem:paired-switches}
Fix a base permutation $\pi_0\in S_n$.  Let
\[
\sigma_t:=(i_t\,j_t)(k_t\,\ell_t),\qquad t=1,\dots,m,
\]
be permutations where all indices $i_t,j_t,k_t,\ell_t$ are distinct across all $t$ (so the supports
are pairwise disjoint and the $\sigma_t$ commute).  Define
\[
\delta_t:=(a_{i_t}-a_{j_t})\bigl(b_{\pi_0(j_t)}-b_{\pi_0(i_t)}\bigr)
      +(a_{k_t}-a_{\ell_t})\bigl(b_{\pi_0(\ell_t)}-b_{\pi_0(k_t)}\bigr),
\qquad
D:=\{\delta_1,\dots,\delta_m\}.
\]
For each $I\subseteq[m]$, let $\pi_I=\pi_0\circ\prod_{t\in I}\sigma_t$. Then
\[
S(\pi_I)=S(\pi_0)+\sum_{t\in I}\delta_t.
\]
In particular, $\Sigma(A,B)$ contains a translate of $\Sigma(D)$ and hence
$\abs{\Sigma(A,B)}\ge \abs{\Sigma(D)}$.
\end{lemma}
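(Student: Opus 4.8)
The plan is to follow the proof of Lemma~\ref{lem:disjoint-transpositions} essentially verbatim, with the single modification that each generator $\sigma_t$ now produces its increment $\delta_t$ via \emph{two} applications of Lemma~\ref{lem:swap-increment} rather than one. First I would record the structural facts. Since the index sets $\{i_t,j_t,k_t,\ell_t\}$, $t=1,\dots,m$, are pairwise disjoint, the permutations $\sigma_t$ act on disjoint subsets of $[n]$; hence they commute, the product $\prod_{t\in I}\sigma_t$ is well defined independently of the order, and $\pi_I$ makes sense. The same disjointness gives the bookkeeping fact used throughout: if $x$ lies in the support of $\sigma_t$ and $J\subseteq[m]\setminus\{t\}$, then $\bigl(\prod_{s\in J}\sigma_s\bigr)(x)=x$, so $\bigl(\pi_0\circ\prod_{s\in J}\sigma_s\bigr)(x)=\pi_0(x)$.

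Next I would compute the one-generator increment. Write $\sigma_t=(i_t\,j_t)\circ(k_t\,\ell_t)$. For any $\rho\in S_n$ agreeing with $\pi_0$ on $\{i_t,j_t,k_t,\ell_t\}$, apply Lemma~\ref{lem:swap-increment} to the transposition $(k_t\,\ell_t)$ to get $S(\rho\circ(k_t\,\ell_t))-S(\rho)=(a_{k_t}-a_{\ell_t})\bigl(b_{\pi_0(\ell_t)}-b_{\pi_0(k_t)}\bigr)$, and then apply it again to $(i_t\,j_t)$, observing that $\rho\circ(k_t\,\ell_t)$ still agrees with $\pi_0$ at positions $i_t,j_t$ because $\{i_t,j_t\}\cap\{k_t,\ell_t\}=\varnothing$; this produces the other term of $\delta_t$. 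Adding the two steps and using $\sigma_t=(k_t\,\ell_t)\circ(i_t\,j_t)$ gives $S(\rho\circ\sigma_t)-S(\rho)=\delta_t$ for every such $\rho$.

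Finally I would telescope exactly as in Lemma~\ref{lem:disjoint-transpositions}. For $I=\{t_1,\dots,t_r\}$ set $\rho_0:=\pi_0$ and $\rho_\ell:=\pi_0\circ\sigma_{t_1}\circ\cdots\circ\sigma_{t_\ell}$. By the bookkeeping fact, $\rho_{\ell-1}$ agrees with $\pi_0$ on the support of $\sigma_{t_\ell}$, so the previous paragraph gives $S(\rho_\ell)-S(\rho_{\ell-1})=\delta_{t_\ell}$; summing over $\ell$ yields $S(\pi_I)=S(\pi_0)+\sum_{t\in I}\delta_t$. As $I$ ranges over all subsets of $[m]$, the values $\sum_{t\in I}\delta_t$ cover $\Sigma(D)$ (choose one index per element of a given $S\subseteq D$), so $\Sigma(A,B)\supseteq S(\pi_0)+\Sigma(D)$ and hence $\abs{\Sigma(A,B)}\ge\abs{\Sigma(D)}$. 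I do not expect any genuine obstacle here: the only point requiring care — and the only place disjointness is truly used — is ensuring that applying earlier generators does not disturb the images at the positions moved by a later generator, which is precisely what the bookkeeping fact guarantees.
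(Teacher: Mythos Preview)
Your proposal is correct and follows essentially the same approach as the paper: establish commutativity from disjoint supports, apply Lemma~\ref{lem:swap-increment} twice to obtain the single-generator increment $\delta_t$, then telescope over $I$ using that earlier generators fix the positions moved by later ones. Your write-up is in fact more carefully detailed than the paper's own (which is a terse sketch), and your explicit ``bookkeeping fact'' is exactly the point the paper summarizes as ``because the supports are disjoint, these increments add.''
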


\begin{proof}
Since the supports are disjoint, the $\sigma_t$ commute and the product $\prod_{t\in I}\sigma_t$ is
well defined.  Moreover, toggling $\sigma_t$ only affects the four positions
$\{i_t,j_t,k_t,\ell_t\}$, and it is the composition of the two disjoint swaps $(i_t\,j_t)$ and
$(k_t\,\ell_t)$.  Applying Lemma~\ref{lem:swap-increment} to each swap and adding the two increments
gives
\[
S(\pi_0\circ\sigma_t)-S(\pi_0)=\delta_t.
\]
Because the supports are disjoint, these increments add when multiple $\sigma_t$ are applied, giving
the stated formula for $S(\pi_I)$.  The translate containment follows immediately.
\end{proof}

We are finally ready to complete the proof of Theorem \ref{thm:main}. 

\medskip
{\it{Proof of Theorem \ref{thm:main}.}} Roughly speaking, we want to finish the argument by combining Lemmas~\ref{lem:two-area-pool} and \ref{lem:paired-switches} with the supportive Hal\'asz Theorem \ref{thm:support-halasz}.

Set $m:=\lfloor n/32\rfloor$.  We iteratively construct disjoint paired switches and increments
$\delta_1,\dots,\delta_m$. Initialize $A_0:=A$, $B_0:=B$.  For $t=1,\dots,m$ do the following.
Since $4m\le n/8$, we have $|A_{t-1}|,|B_{t-1}|\ge n-4(t-1)\ge 7n/8\ge 4$.
Apply Lemma~\ref{lem:two-area-pool} to $(A_{t-1},B_{t-1})$ to obtain a set
\[
U_{t-1}:=U(A_{t-1},B_{t-1})\subseteq (A_{t-1}-A_{t-1})(B_{t-1}-B_{t-1})+(A_{t-1}-A_{t-1})(B_{t-1}-B_{t-1})
\]
with
\[
|U_{t-1}|\ \ge\ \frac{|A_{t-1}||B_{t-1}|}{8}\ \ge\ \frac{n^2}{16}.
\]
Choose $\delta_t$ uniformly at random from $U_{t-1}\setminus\{\delta_1,\dots,\delta_{t-1}\}$, and
fix one of its disjoint representations
\[
\delta_t=(a_{i_t}-a_{j_t})(b_{p_t}-b_{q_t})+(a_{k_t}-a_{\ell_t})(b_{r_t}-b_{s_t})
\]
with $i_t,j_t,k_t,\ell_t$ distinct and $p_t,q_t,r_t,s_t$ distinct (guaranteed by the lemma).
Delete the used elements:
\[
A_t:=A_{t-1}\setminus\{a_{i_t},a_{j_t},a_{k_t},a_{\ell_t}\},\qquad
B_t:=B_{t-1}\setminus\{b_{p_t},b_{q_t},b_{r_t},b_{s_t}\}.
\]
After $m$ steps, the $4m$ row indices and $4m$ column indices used are pairwise distinct. Finally, let $D:=\{\delta_1,\dots,\delta_m\} \subset (A-A)(B-B)+(A-A)(B-B)$.

The proof of Lemma~\ref{lem:expected-energy} applies verbatim
(with the pool size $R$ there replaced by $|U_{t-1}|\ge n^2/16$ here) and gives
\[
\EE\,E_2(D)\ \ll\ m^2+\frac{m^4}{n^2}.
\]
With $m=\lfloor n/32\rfloor$ we have $m^4/n^2\ll m^2$, hence $\EE E_2(D)\ll m^2$, so there exists an
outcome with $E_2(D)\ll m^2$.

For such an outcome, define a base permutation $\pi_0\in S_n$ by prescribing
\[
\pi_0(i_t)=q_t,\ \ \pi_0(j_t)=p_t,\ \ \pi_0(k_t)=s_t,\ \ \pi_0(\ell_t)=r_t
\qquad (t=1,\dots,m),
\]
and extending arbitrarily on the remaining indices.  Then
\[
(a_{i_t}-a_{j_t})\bigl(b_{\pi_0(j_t)}-b_{\pi_0(i_t)}\bigr)=(a_{i_t}-a_{j_t})(b_{p_t}-b_{q_t}),
\]
and similarly for the $(k_t,\ell_t)$--pair, so Lemma~\ref{lem:paired-switches} yields
$\abs{\Sigma(A,B)}\ge \abs{\Sigma(D)}$.

Finally, we apply \Cref{thm:support-halasz} to $D$ like before:
\[
\abs{\Sigma(D)}\ \gg\ \frac{m^5}{E_2(D)}\ \gg\ m^3\ \gg\ n^3,
\]
which completes the proof.
    \qed

\section{Concluding remarks} \label{sec:conclusion}
The permutation dot-product problem admits the following natural matrix generalization.  Given
$M=(m_{ij})\in\RR^{n\times n}$, define
\[
S_M(\pi):=\sum_{i=1}^n m_{i,\pi(i)},\qquad
\Sigma(M):=\{S_M(\pi):\pi\in S_n\}.
\]
Equivalently, writing $P_\pi$ for the permutation matrix of $\pi$ and $\langle\cdot,\cdot\rangle_F$
for the Frobenius inner product, we have
\[
S_M(\pi)=\langle M,P_\pi\rangle_F,
\]
so $\Sigma(M)$ is the set of values taken by the linear functional $X\mapsto\langle M,X\rangle_F$
on the vertex set of the Birkhoff polytope $\mathcal{B}_n=\mathrm{conv}\{P_\pi:\pi\in S_n\}$.

Like the permutohedron, the Birkhoff polytope $\mathcal{B}_n$ also contains many embedded hypercubes: if one fixes a base vertex $P_{\pi_0}$ and a collection of
pairwise disjoint transpositions, then toggling any subset of these transpositions produces
$2^m$ vertices forming the vertex set of an $m$--dimensional cube-face. Projecting such a cube under $X\mapsto\langle M,X\rangle_F$ yields a translate of a subset-sum set,
and then perhaps our supportive Hal\'asz Theorem \ref{thm:support-halasz} could used again to lower bound $\Sigma(M)$ via the
increment set along the cube directions.

At the same time, unlike the rank-one setting $m_{ij}=a_i b_j$, no nontrivial lower bound on
$\abs{\Sigma(M)}$ is possible without additional hypotheses on $M$.
For example, fix two vectors $u,v\in\RR^n$ and let $m_{ij}=u_i+v_j$ for every $1 \leq i,j \leq n$. Then for every $\pi\in S_n$,
\[
S_M(\pi)=\sum_{i=1}^n (u_i+v_{\pi(i)})=\sum_{i=1}^n u_i+\sum_{j=1}^n v_j
\]
is independent of $\pi$, and hence $\abs{\Sigma(M)}=1$.
It would thus be interesting to identify the natural geometric conditions on $M$ that force $\abs{\Sigma(M)}$ to exhibit growth, and in particular to generalize Theorem \ref{thm:main} in the ``correct'' way to this setting.

\end{document}